\newtheorem{theorem}{Theorem}[section]
\newtheorem{lemma}[theorem]{Lemma}
\newtheorem{proposition}[theorem]{Proposition}
\newtheorem{observation}[theorem]{Observation}
\newtheorem{corollary}[theorem]{Corollary}
\theoremstyle{definition}\newtheorem{remark}[theorem]{Remark}
\newtheorem*{question*}{Question}
\theoremstyle{definition}\newtheorem{definition}[theorem]{Definition}
\newtheorem{example}[theorem]{Example}
\author{Hannah Klawa}
\address{Department of Mathematics \\ Indiana University East \\ Richmond, IN 47374}
\email{hklawa@iue.edu}
\title{Graded Perinormality}
\subjclass[2010]{Primary: 13G05  Secondary: 13A02, 13A15, 13B30, 13F05}
\keywords{Graded integral domain, graded going-down, graded perinormal domain, graded globally perinormal domain, flat overrings, graded overrings, weakly normal}
\begin{document}

\begin{abstract} An integral domain $R$ is \emph{perinormal} if every local going-down overring is a localization of $R$ and \emph{globally perinormal} if every going-down overring is a localization of $R$. In this paper, I introduce notions of graded perinormal and graded globally perinormal domains and show that many results obtained for perinormal and globally perinormal domains have graded analogs. I also give some results for descent of properties between a graded domain and its $0$th graded component.
\end{abstract}

\maketitle
\section{Introduction}

In~\cite{EpSh-peri}, Epstein and Shapiro introduced the notions of perinormality and global perinormality. An integral domain $R$ is \emph{perinormal} if every local going-down overring is a localization of $R$ and \emph{globally perinormal} if every going-down overring is a localization of $R$. Epstein and Shapiro showed that perinormality is close to normality in the sense that Noetherian normal implies perinormal which implies weakly normal~\cite[Corollary 3.4, Theorem 3.10]{EpSh-peri}. The purpose of this paper is to introduce graded notions of perinormality and global perinormality and show that many results that hold for perinormality and global perinormality have graded analogs. In the non-graded setting, McCrady and Weston in~\cite{McWe-periext} and Dumitrescu and Rani in~\cite{DuRa-peripoly} have studied the question posed by Epstein and Shapiro in~\cite{EpSh-peri} of whether for an integral domain $A$, $A[X]$ being a perinormal domain implies $A$ is perinormal. I give some results for descent of properties between a graded domain and its $0$th graded component and utilize them to consider the graded analog of the polynomial ring question that has been considered in~\cite{McWe-periext} and~\cite{DuRa-peripoly}.

\section{Graded overrings}

Recall that a \emph{torsionless grading monoid} is a torsion-free commutative cancellative monoid. Let $\Gamma$ be a torsionless grading monoid. An integral domain $R$ is a \emph{$\Gamma$-graded domain} if
$$R = \bigoplus_{\alpha \in \Gamma}R_{\alpha}$$
where each $R_{\alpha}$ is an additive subgroup of $R$ and $R_{\beta}R_{\gamma} \subseteq R_{\beta + \gamma}$ for all $\beta, \gamma \in \Gamma$. A nonzero element $r \in R_{\alpha}$ is \emph{homogeneous} of degree $\alpha$ and each $R_{\alpha}$ is a \emph{graded component} of $R$.

Given a $\Gamma$-graded integral domain $$R = \bigoplus_{\alpha \in \Gamma}R_{\alpha},$$ $R_H$ where 
$$H = \{x \in R\, | \, x \neq 0\text{ is homogeneous}\}$$ is the \emph{graded quotient field} of $R$ and is a $\langle \Gamma \rangle = \{a - b\, | \, a, b \in \Gamma\}$ graded domain where 
$$(R_H)_{\alpha} = \left\lbrace\tfrac{f}{g}\,|\,f,g \textnormal{ are homogeneous}, g \neq 0\textnormal{ and }\deg(f) - \deg(g) = \alpha\right\rbrace$$
for $\alpha \in \langle \Gamma \rangle$.

The graded (or homogeneous) primes of $R$ will be denoted $h\textnormal{-Spec}(R)$ and the graded maximal ideals of $R$ will be denoted $h\textnormal{-Max}(R)$. A graded integral domain $R$ with a single graded maximal ideal is gr-local.

Unless otherwise stated $G$ will always denote a torsion free abelian group and we will assume all graded domains are graded by a torsion free abelian group unless otherwise stated.

It will be useful later on to be able to relate an overring of the $0$th graded component of a graded integral domain to an overring of the graded integral domain. Given an overring $A$ of the $0$th graded component $R_0$ of a graded integral domain $R = \bigoplus\limits_{\alpha \in G} R_{\alpha}$, let
$$AR_{\alpha} \coloneqq \left\lbrace \sum_{\textnormal{finite}}a_ir_i \,\Big\vert \, a_i \in A,\,r_i\in R_{\alpha}\right\rbrace.$$
It is elementary to show that $AR\coloneqq \bigoplus\limits_{\alpha \in G} AR_{\alpha} \subseteq R_H$ is a graded overring of $R$. If $A$ or $R$ is flat over $R_0$, it is elementary to show that $AR \cong A \otimes_{R_0} R$ as $R_0$-algebras and flat over $R$.

If neither $A$ nor $R$ is not flat over $R_0$, $AR$ need not be isomorphic to $A \otimes_{R_0} R$ as the example below demonstrates.

\begin{example} Let $R = k[x^2, x^3] + yk[x,y]$ graded by the degree of $y$. So $R_0 = k[x^2, x^3]$. Consider the overring $A = k[x]$ of $R_0$. Then $A \otimes_{R_0} R$ is not an integral domain. Consider the elements $\alpha = x^2 \otimes y - x \otimes xy$ and $\beta = x^5\otimes 1$. Then 
\begin{eqnarray*}
\alpha\beta & = & (x^2 \otimes y - x \otimes xy)(x^5 \otimes 1)\\
& = & (x^7 \otimes y) - (x^6 \otimes xy)\\
& = & (x^4 \otimes x^3y) - (x^4 \otimes x^3y)\\
& = & 0
\end{eqnarray*}
but neither $\alpha$ nor $\beta$ are zero in $A \otimes_{R_0} R$ as is shown below.

\bigskip
Let $$\psi: A \times R \longrightarrow R_0\slash (x^2,x^3)\cong k$$ be given by 
$$(a_0 + a_1x + \cdots + a_nx^n, b_{00} + b_{01}y  + b_{11}xy + \cdots + b_{jk}x^jy^k) \longmapsto a_1b_{11}.$$
This is clearly well-defined. Let $f_1 = a_0 + a_1x + \cdots + a_nx^n, f_2 = d_0 + d_1x + \cdots + d_mx^m \in A$, $g_1 = b_{00} + b_{01}y  + b_{11}xy + \cdots + b_{jk}x^jy^k, g_2 = c_{00} + c_{01}y + c_{11}xy + \cdots + c_{i\ell}x^iy^{\ell} \in R$ and $r = r_0 + r_2x^2 + \cdots + r_sx^s, t = t_0 + t_2x^2 + \cdots + t_wx^w \in R_0$ be arbitrary. Then
\begin{eqnarray*}
\psi(rf_1 + tf_2, g_1) & = &(r_0a_1 + t_0d_1)b_{11}\\
& = & r_0a_1b_{11} + t_0d_1b_{11}\\
&=& r\psi(f_1,g_1) + t\psi(f_2, g_1)
\end{eqnarray*}
and
\begin{eqnarray*}
\psi(f_1, rg_1 + tg_2) & = & a_1(r_0b_{11} + t_0c_{11})\\
& = & a_1r_0b_{11} + a_1t_0c_{11}\\
& = & r_0a_1b_{11} + t_0a_1c_{11}\\
& = & r\psi(f_1, g_1) + t\psi(f_1, g_2).
\end{eqnarray*}
So $\psi$ is an $R_0$-bilinear map. So, by~\cite[Corollary 12 p. \!\!\!368]{dummit}, there is an $R_0$-module homomorphism
$$\Psi: A \otimes_{R_0} R \longrightarrow R_0\slash (x^2,x^3)$$
given by 
$$(a_0 + a_1x + \cdots + a_nx^n) \otimes (b_{00} + b_{01}y  + b_{11}xy + \cdots + b_{jk}x^jy^k) \longmapsto a_1b_{11}.$$
Note that
$$\Psi(x^2 \otimes y - x \otimes xy) = -1$$
and hence $x^2 \otimes y - x \otimes xy \neq 0 \in A \otimes_{R_0} R$. 
\bigskip

It is clear that the map $$\varphi: A \times R \longrightarrow k[x,y]$$
given by 
$$(f,g) \longmapsto fg$$
is a well-defined $R_0$-bilinear map. So, by~\cite[Corollary 12 p. \!\!\!368]{dummit}, there is an $R_0$-module homomorphism 
$$\Phi: A \otimes_{R_0}R \longrightarrow k[x,y]$$
given by 
$$f \otimes g\longmapsto fg.$$
Note that $\Phi(x^5 \otimes 1) = x^5$ and hence $x^5 \otimes 1 \neq 0 \in A \otimes_{R_0} R$.

\end{example}

\subsection{Graded going-down}

In~\cite[Definition 2.1]{Sah-gradedgd}, Sahandi and Shirmohammadi introduce a notion of graded going-down for an extension of integral domains. The same notion, restricted to graded overrings, is introduced in~\cite[Definition 2.1.2]{Kla-dissertation} as ${}^\ast$going-down. We utilize the terminology of Sahandi and Shirmohammadi in what follows.

\begin{definition} Let $R$ be a $G$-graded domain. A graded overring $S$ of $R$ \emph{satisfies graded going-down $($or gGD$)$} or is a \emph{graded going-down overring $($or gGD overring$)$} if whenever $P, Q \in  h\textnormal{-Spec}(R)$ and $\tilde{Q} \in  h\textnormal{-Spec}(S)$ such that $P \subseteq Q$ and $\tilde{Q} \cap R = Q$, there exists $\tilde{P} \in  h\textnormal{-Spec}(S)$ such that $\tilde{P} \subseteq \tilde{Q}$ and $\tilde{P} \cap R = P$.
\end{definition}

It is clear that if $S$ is a graded overring of $R$ that satisfies going-down, then $S$ satisfies gGD.

\begin{observation}\label{gr-goingdown-surj} Let $R$ be a graded domain and $S$ a graded overring of $R$. Then $S$ satisfies gGD over $R$ if and only if the map $$h\textnormal{-Spec}(S_{(Q)}) \longrightarrow h\textnormal{-Spec}(R_{(Q \cap R)})$$ is surjective for each $Q \in h\textnormal{-Spec}(S)$. 
\end{observation}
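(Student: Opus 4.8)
The plan is to reduce both directions to the standard correspondence between the homogeneous spectrum of a homogeneous localization and the homogeneous primes contained in the localizing prime, after which the equivalence with gGD becomes essentially a matter of unwinding definitions. Recall that $S_{(Q)}$ denotes the localization of $S$ at the multiplicative set of homogeneous elements not lying in $Q$, and similarly $R_{(Q\cap R)}$ localizes $R$ at the homogeneous elements outside $Q\cap R$; since $R\hookrightarrow S$ is a degree-preserving inclusion, $Q\cap R$ is automatically a homogeneous prime of $R$. A homogeneous element $r\in R$ lies outside $Q\cap R$ exactly when it lies outside $Q$, so every homogeneous element inverted in forming $R_{(Q\cap R)}$ is already invertible in $S_{(Q)}$. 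This yields a degree-preserving ring homomorphism $R_{(Q\cap R)}\to S_{(Q)}$ factoring $R\to S\to S_{(Q)}$, and the map $h\textnormal{-Spec}(S_{(Q)})\to h\textnormal{-Spec}(R_{(Q\cap R)})$ in the statement is the associated contraction map.

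First I would record the graded localization correspondence: for a graded domain $T$ and $\mathfrak{p}\in h\textnormal{-Spec}(T)$, extension and contraction give an inclusion-preserving bijection between $h\textnormal{-Spec}(T_{(\mathfrak{p})})$ and $\{\mathfrak{q}\in h\textnormal{-Spec}(T) : \mathfrak{q}\subseteq\mathfrak{p}\}$ (a homogeneous prime $\mathfrak{q}$ survives in $T_{(\mathfrak{p})}$ precisely when it meets no homogeneous element outside $\mathfrak{p}$, i.e.\ when $\mathfrak{q}\subseteq\mathfrak{p}$). Applying this to $T=S$ at $\mathfrak{p}=Q$ and to $T=R$ at $\mathfrak{p}=Q\cap R$ identifies the two homogeneous spectra in the statement with $\{\tilde{P}\in h\textnormal{-Spec}(S) : \tilde{P}\subseteq Q\}$ and $\{P\in h\textnormal{-Spec}(R) : P\subseteq Q\cap R\}$, respectively. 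Chasing the two factorizations of $R\to S_{(Q)}$ through these bijections, and using that the contraction of $\tilde{P}S_{(Q)}$ to $S$ recovers $\tilde{P}$, shows that under these identifications the displayed map is simply $\tilde{P}\mapsto\tilde{P}\cap R$.

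With this in hand, both directions are immediate. The surjectivity of the map for a fixed $Q$ says precisely that every $P\in h\textnormal{-Spec}(R)$ with $P\subseteq Q\cap R$ equals $\tilde{P}\cap R$ for some $\tilde{P}\in h\textnormal{-Spec}(S)$ with $\tilde{P}\subseteq Q$. For the forward direction, given such a $P$, I would invoke gGD with $\tilde{Q}=Q$, so that $Q\cap R$ plays the role of the larger prime of $R$, to produce the required $\tilde{P}$ and hence surjectivity. Conversely, an instance of gGD consists of a prime $\tilde{Q}\in h\textnormal{-Spec}(S)$, the prime $\tilde{Q}\cap R\in h\textnormal{-Spec}(R)$, and a homogeneous prime $P\subseteq\tilde{Q}\cap R$; applying surjectivity at $Q=\tilde{Q}$ produces exactly the $\tilde{P}\subseteq\tilde{Q}$ with $\tilde{P}\cap R=P$ demanded by the definition.

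The only genuine content lies in the second paragraph: the graded localization correspondence together with the compatibility of the contraction maps. These are the homogeneous analogs of routine facts from localization theory, and I expect the main (though still elementary) obstacle to be the bookkeeping needed to confirm that homogeneous localization inverts exactly the right homogeneous elements and that the induced spectral map coincides with the naive contraction $\tilde{P}\mapsto\tilde{P}\cap R$. Once that is settled, the equivalence with gGD is a direct translation.
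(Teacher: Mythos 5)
Your proof is correct and is essentially the paper's own argument: the paper simply states that the proof is ``parallel to the well-known non-graded result with appropriate graded substitutions,'' and what you have written out -- the correspondence between $h\textnormal{-Spec}(T_{(\mathfrak{p})})$ and the homogeneous primes of $T$ contained in $\mathfrak{p}$, followed by unwinding both directions against the definition of gGD -- is precisely that graded substitution made explicit.
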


\begin{proof} The proof is parallel to the well-known non-graded result with appropriate graded substitutions.
\end{proof}

\begin{observation} Let $R$ be a $G$-graded domain and $T$ a graded overring of $R$. If $R \subseteq T$ satisfies gGD and $W$ is a homogeneous multiplicative subset of $T$, then $R_{W \cap R} \subseteq T_{W}$ satisfies gGD.
\end{observation}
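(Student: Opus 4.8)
The plan is to verify the defining lifting property for $R_{W\cap R}\subseteq T_W$ by pulling every homogeneous prime back to the pair $R\subseteq T$, where gGD is already known, and then pushing the lift found there forward through the localization. The whole argument rests on the standard graded localization dictionary: since $W$ is a homogeneous multiplicative subset of $T$, the set $W\cap R$ is a homogeneous multiplicative subset of $R$, and the assignment $\tilde{Q}\mapsto \tilde{Q}T_W$ is an inclusion-preserving bijection between $\{\tilde{Q}\in h\textnormal{-Spec}(T)\mid \tilde{Q}\cap W=\emptyset\}$ and $h\textnormal{-Spec}(T_W)$, with inverse given by contraction; likewise for $R_{W\cap R}$ over $R$. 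I would record these first, together with the one compatibility I actually need, namely $\tilde{Q}T_W\cap R_{W\cap R}=(\tilde{Q}\cap R)R_{W\cap R}$ for $\tilde{Q}\in h\textnormal{-Spec}(T)$ with $\tilde{Q}\cap W=\emptyset$.

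The main body is then a short chase. Take $\mathfrak{p}\subseteq\mathfrak{q}$ in $h\textnormal{-Spec}(R_{W\cap R})$ and $\tilde{\mathfrak{q}}\in h\textnormal{-Spec}(T_W)$ with $\tilde{\mathfrak{q}}\cap R_{W\cap R}=\mathfrak{q}$. By the dictionary, write $\mathfrak{p}=PR_{W\cap R}$ and $\mathfrak{q}=QR_{W\cap R}$ for homogeneous primes $P\subseteq Q$ of $R$ disjoint from $W\cap R$, and $\tilde{\mathfrak{q}}=\tilde{Q}T_W$ for $\tilde{Q}\in h\textnormal{-Spec}(T)$ disjoint from $W$. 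The compatibility identity applied to $\tilde{\mathfrak{q}}\cap R_{W\cap R}=\mathfrak{q}$ yields $(\tilde{Q}\cap R)R_{W\cap R}=QR_{W\cap R}$, and since $\tilde{Q}\cap R$ is disjoint from $W\cap R$, the bijection forces $\tilde{Q}\cap R=Q$. Now I may invoke the hypothesis: $R\subseteq T$ satisfies gGD, so from $P\subseteq Q=\tilde{Q}\cap R$ and $\tilde{Q}\in h\textnormal{-Spec}(T)$ there is $\tilde{P}\in h\textnormal{-Spec}(T)$ with $\tilde{P}\subseteq\tilde{Q}$ and $\tilde{P}\cap R=P$.

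It remains only to transport $\tilde{P}$ back up to $T_W$, and here the containment does the work for free: because $\tilde{P}\subseteq\tilde{Q}$ and $\tilde{Q}\cap W=\emptyset$, also $\tilde{P}\cap W=\emptyset$, so $\tilde{\mathfrak{p}}\coloneqq\tilde{P}T_W$ is a genuine homogeneous prime of $T_W$. By order-preservation $\tilde{\mathfrak{p}}\subseteq\tilde{\mathfrak{q}}$, and by the compatibility identity $\tilde{\mathfrak{p}}\cap R_{W\cap R}=(\tilde{P}\cap R)R_{W\cap R}=PR_{W\cap R}=\mathfrak{p}$. This is exactly the required lift, so $R_{W\cap R}\subseteq T_W$ satisfies gGD.

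The one step deserving genuine care, and the place I would do an actual computation, is the compatibility $\tilde{Q}T_W\cap R_{W\cap R}=(\tilde{Q}\cap R)R_{W\cap R}$; the nontrivial inclusion $\subseteq$ is where the domain hypothesis enters. Writing a homogeneous element as $r/s=t/w$ with $r,s$ homogeneous in $R$ (and $s\in W\cap R$) and $t\in\tilde{Q}$, $w\in W$, integrality gives $wr=st\in\tilde{Q}$; since $w\notin\tilde{Q}$ and $\tilde{Q}$ is prime, $r\in\tilde{Q}\cap R$, as needed. No flatness is required because $T$ is a domain. As an alternative that sidesteps this bookkeeping, one could instead apply the surjectivity criterion of Observation~\ref{gr-goingdown-surj}: identifying the iterated homogeneous localizations $(T_W)_{(\tilde{\mathfrak{q}})}\cong T_{(\tilde{Q})}$ and $(R_{W\cap R})_{(\tilde{\mathfrak{q}}\cap R_{W\cap R})}\cong R_{(\tilde{Q}\cap R)}$ reduces the desired surjectivity of $h\textnormal{-Spec}$ maps for $R_{W\cap R}\subseteq T_W$ to the one already guaranteed for $R\subseteq T$; the cost there is simply checking that those iterated localizations are graded isomorphisms.
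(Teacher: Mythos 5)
Your proof is correct and is essentially the argument the paper has in mind: the paper's proof of this observation simply states that it is "parallel to the non-graded version making appropriate graded substitutions," and your write-up is exactly that parallel argument (the localization dictionary for homogeneous primes disjoint from $W$, the compatibility $\tilde{Q}T_W\cap R_{W\cap R}=(\tilde{Q}\cap R)R_{W\cap R}$, and the lift-and-push chase) carried out in detail. No gaps.
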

\begin{proof} The proof is parallel to the non-graded version making appropriate graded substitutions.
\end{proof}

\subsection{Flat graded overrings}

It may be well-known that the graded version of Richman's flatness criteria~\cite[Theorem 2]{Ric-genqr} given below holds, but we will prove it in this section for completeness since it will be used several times. We will use the following results.

\begin{proposition}[\cite{Her-2notesflat}, Proposition 3.1]\label{Herrmannflat} Let $R$ be a $G$-graded ring where $G$ is an \textnormal{(}additive\textnormal{)} abelian group. Then for any graded $R$-module $M$, $M$ is flat over $R$ if and only if for any exact sequence of graded $R$-modules and graded homomorphisms 
$$0 \longrightarrow N^{\prime} \longrightarrow N \longrightarrow N^{\prime\prime} \longrightarrow 0,$$
the induced sequence 
$$0 \longrightarrow N^{\prime} \otimes_R M \longrightarrow N \otimes_R M \longrightarrow N^{\prime\prime} \otimes_R M \longrightarrow 0$$
is exact.
\end{proposition}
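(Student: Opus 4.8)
The plan is to prove the two implications separately, with the forward direction essentially free and the reverse direction carrying all of the content. For the forward direction, if $M$ is flat over $R$ in the ordinary sense, then $-\otimes_R M$ preserves exactness of \emph{every} short exact sequence of $R$-modules; a short exact sequence of graded modules is in particular a short exact sequence of $R$-modules, so the tensored sequence is exact, and since every object and map in sight is graded this is an exact sequence in the graded category, exactly as stated. For the reverse direction I would show that the hypothesis forces $M$ to be flat by verifying the equational criterion for flatness (Bourbaki/Matsumura): it suffices to check that for every relation $\sum_{i=1}^n a_i x_i = 0$ with $a_i\in R$ and $x_i\in M$ there exist $x'_1,\dots,x'_m\in M$ and $c_{ij}\in R$ with $x_i=\sum_j c_{ij}x'_j$ and $\sum_i a_i c_{ij}=0$ for all $j$. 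The first observation is that the hypothesis is precisely the statement that $M$ is flat as an object of the category of graded $R$-modules, and that, because that category has the graded free modules (finite direct sums of degree-shifted copies of $R$) as a generating family of finitely generated projectives, the classical proof of the equational/factorization criterion adapts verbatim in the graded setting, replacing ``free'' by ``graded free'' throughout. This yields the graded factorization criterion: every graded homomorphism $\Theta$ from a finitely generated graded free module $P$ to $M$, together with any finitely generated graded submodule $K\subseteq\ker\Theta$, admits a factorization $\Theta=\tau\circ\sigma$ through a finitely generated graded free module $P'$, with $\sigma\colon P\to P'$ and $\tau\colon P'\to M$ graded and $K\subseteq\ker\sigma$.

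The heart of the argument is then to homogenize an arbitrary relation and reassemble. Given $\sum_i a_i x_i=0$, I would decompose $a_i=\sum_g a_i^{(g)}$ and $x_i=\sum_h x_i^{(h)}$ into homogeneous components and form the finitely generated graded free module $P=\bigoplus_{(i,h)} R\,\epsilon_{ih}$ with $\deg\epsilon_{ih}=\deg x_i^{(h)}$, together with the graded map $\Theta(\epsilon_{ih})=x_i^{(h)}$. Comparing homogeneous components of $\sum_i a_i x_i=0$ degree by degree shows that each element $\rho_d=\sum_{(i,h)} a_i^{(d-\deg x_i^{(h)})}\,\epsilon_{ih}$ is a homogeneous element of $\ker\Theta$ (its image under $\Theta$ is the degree-$d$ component of $0$), so the finitely generated graded submodule $K$ they generate lies in $\ker\Theta$. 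Applying the graded factorization criterion produces $\sigma,\tau$ as above; writing $\tau(f_j)=x'_j$ on a homogeneous basis $\{f_j\}$ of $P'$ and $\sigma(\epsilon_{ih})=\sum_j c_{(ih),j}f_j$, I would set $c_{ij}=\sum_h c_{(ih),j}$. Then $x_i=\sum_j c_{ij}x'_j$ follows by summing $x_i^{(h)}=\tau\sigma(\epsilon_{ih})$ over $h$, while $\sum_i a_i c_{ij}=0$ follows by summing the scalar identities $\sum_{(i,h)} a_i^{(d-\deg x_i^{(h)})}c_{(ih),j}=0$ (coming from $\sigma(\rho_d)=0$) over all $d$ and using $\sum_g a_i^{(g)}=a_i$. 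This verifies the equational criterion and hence flatness.

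I expect the main obstacle to be twofold. First, one must justify that the hypothesis, phrased only through exactness of tensored sequences, really delivers the graded factorization/equational criterion; this requires checking that the classical proof uses only finitely generated (graded) free modules and maps between them, each of which can be chosen homogeneous with suitable degree shifts. Second, the degree bookkeeping in the reassembly must be done carefully, since a single homogeneous component $x_i^{(h)}$ participates in several of the relations $\rho_d$; the device of treating the pairs $(i,h)$ as independent basis vectors of $P$ and only collapsing back to the index $i$ at the very end (via $c_{ij}=\sum_h c_{(ih),j}$) is exactly what makes the separate degree-components cohere into a single lifting of the original inhomogeneous relation. As an alternative route that avoids the explicit reassembly, I would note that one can instead invoke the graded form of Lazard's theorem to write $M$ as a filtered colimit of finitely generated graded free modules, and then apply the exact, colimit-preserving forgetful functor to $R$-modules, which sends each graded free to a free module, thereby realizing $M$ as a filtered colimit of free modules and hence flat.
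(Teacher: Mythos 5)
The paper offers no proof of this statement at all: it is imported verbatim as \cite[Proposition 3.1]{Her-2notesflat}, so there is no in-paper argument to compare yours against. Judged on its own, your proof is correct and complete in outline. The forward direction is indeed immediate. For the converse, the real content is exactly what you identify: the hypothesis says $M$ is flat as an object of the category of $G$-graded $R$-modules, and one must upgrade this to ordinary flatness. Your two ingredients both hold up. First, the graded equational/factorization criterion does follow from the hypothesis, because the classical derivation only ever tensors $M$ against the graded inclusion $I\hookrightarrow R$ for a finitely generated homogeneous ideal $I=(a_1,\dots,a_n)$ presented by a degree-shifted graded free module, and the induction to the factorization lemma uses only homogeneous generators of $K$. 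Second, the reassembly is where a careless argument would break, and your bookkeeping is right: $\sigma(\rho_d)=0$ gives $\sum_{(i,h)}a_i^{(d-\deg x_i^{(h)})}c_{(ih),j}=0$ for each $d$ and each basis vector $f_j$, and summing over the finitely many relevant $d$ recovers $\sum_i a_i c_{ij}=0$ with $c_{ij}=\sum_h c_{(ih),j}$, while summing $x_i^{(h)}=\sum_j c_{(ih),j}x_j'$ over $h$ recovers $x_i=\sum_j c_{ij}x_j'$. The one hypothesis you are silently using is that $G$ is a group, which is what makes each shift $R(g)$ free of rank one as an ungraded module; this is part of the proposition's hypotheses, so no harm done, but it is worth flagging since elsewhere the paper works over torsionless grading monoids. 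Your alternative route via the graded Lazard theorem is the cleanest packaging of the same idea (a filtered colimit of finitely generated graded free modules is a filtered colimit of free modules, hence flat) and is essentially the textbook proof that gr-flat coincides with flat for group gradings.
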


\begin{lemma}\label{lemma-gr-rich} Let $R$ be a $G$-graded domain where $G$ is an torsion-free abelian group and $S$ a graded $R$-algebra. Then $S$ is flat if and only if for all $\mathfrak{n} \in  h\textnormal{-Max}(S)$, $S_{(\mathfrak{n})}$ is flat over $R_{(\mathfrak{n} \cap R)}$.

\begin{proof} Let $R$ be a $G$-graded domain where $G$ is an abelian group and $S$ a graded $R$-algebra. If $S$ is flat, then $S_{(\mathfrak{n})}$ is flat over $R_{(\mathfrak{n} \cap R)}$ for all $\mathfrak{n} \in  h\textnormal{-Max}(S)$.
\bigskip
Suppose that $S_{(\mathfrak{n})}$ is flat over $R_{(\mathfrak{n} \cap R)}$ for all $\mathfrak{n} \in  h\textnormal{-Max}(S)$. Suppose there exists a graded inclusion of graded $R$-modules $N \hookrightarrow M$ such that 
$$\varphi: S \otimes_R N \longrightarrow S \otimes_R M$$ is not injective. Let $K = \text{Ker}\,\varphi$. Let $\mathfrak{n} \in  h\textnormal{-Max}(S)$. Because $S_{(\mathfrak{n})}$ is flat over $R_{(\mathfrak{n} \cap R)}$
$$(S \otimes_R N)_{(\mathfrak{n})} \cong S_{(\mathfrak{n})}\otimes_{R_{(\mathfrak{n} \cap R)}} N_{(\mathfrak{n})} \overset{\varphi_{(\mathfrak{n})}}\longrightarrow S_{(\mathfrak{n})} \otimes_{R_{(\mathfrak{n} \cap R)}} M_{(\mathfrak{n})} \cong (S \otimes_R M)_{(\mathfrak{n})}$$
is injective by Proposition~\ref{Herrmannflat}. So $K_{(\mathfrak{n})} = 0$ for all $\mathfrak{n} \in  h\textnormal{-Max}(S)$. Thus $K = 0$. So $\varphi$ is injective. Hence $S$ is flat over $R$ by Proposition~\ref{Herrmannflat}.
\end{proof}
\end{lemma}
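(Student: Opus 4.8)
The plan is to prove this as the graded incarnation of the familiar principle that flatness may be checked graded-locally, with Proposition~\ref{Herrmannflat} (the graded analog of Herrmann's criterion) standing in for the usual characterization of flatness by preservation of injections. The forward implication should require essentially no work: if $S$ is flat over $R$, then $S_{(\mathfrak{n})}$, being a graded localization of $S$, is again flat over $R$, and since the homogeneous elements of $R_{(\mathfrak{n}\cap R)}$ are already invertible in $S_{(\mathfrak{n})}$ one has $S_{(\mathfrak{n})} \cong S_{(\mathfrak{n})} \otimes_R R_{(\mathfrak{n}\cap R)}$, which is flat over $R_{(\mathfrak{n}\cap R)}$ by base change.

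For the converse, I would assume $S_{(\mathfrak{n})}$ is flat over $R_{(\mathfrak{n}\cap R)}$ for every $\mathfrak{n} \in h\textnormal{-Max}(S)$. By Proposition~\ref{Herrmannflat} it suffices to show that for every graded inclusion $N \hookrightarrow M$ of graded $R$-modules the induced map $\varphi \colon S \otimes_R N \to S \otimes_R M$ is injective. I would set $K = \ker\varphi$, a graded $S$-module, and reduce the problem to showing $K = 0$ by a graded local-global argument. The central computation is to fix $\mathfrak{n} \in h\textnormal{-Max}(S)$ and identify $\varphi_{(\mathfrak{n})}$: using compatibility of graded localization with tensor products one has $(S \otimes_R N)_{(\mathfrak{n})} \cong S_{(\mathfrak{n})} \otimes_{R_{(\mathfrak{n}\cap R)}} N_{(\mathfrak{n})}$ and likewise for $M$, so that $\varphi_{(\mathfrak{n})}$ becomes the map $S_{(\mathfrak{n})} \otimes_{R_{(\mathfrak{n}\cap R)}} N_{(\mathfrak{n})} \to S_{(\mathfrak{n})} \otimes_{R_{(\mathfrak{n}\cap R)}} M_{(\mathfrak{n})}$. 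Since graded localization is exact, $N_{(\mathfrak{n})} \hookrightarrow M_{(\mathfrak{n})}$ remains a graded inclusion, and flatness of $S_{(\mathfrak{n})}$ over $R_{(\mathfrak{n}\cap R)}$ (again via Proposition~\ref{Herrmannflat}) makes $\varphi_{(\mathfrak{n})}$ injective; exactness of localization then gives $K_{(\mathfrak{n})} = \ker \varphi_{(\mathfrak{n})} = 0$.

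It then remains to deduce $K = 0$ from the vanishing of all its graded localizations. This graded local-global vanishing principle is the one ingredient I would establish carefully rather than cite verbatim from the non-graded theory, and it is where the main (though modest) obstacle lies. I would argue contrapositively: if $K \neq 0$, choose a nonzero homogeneous $x \in K$; its annihilator is a proper graded ideal of $S$, hence contained in some $\mathfrak{n} \in h\textnormal{-Max}(S)$ by the graded analog of the fact that every proper graded ideal lies in a graded maximal ideal. No homogeneous element outside $\mathfrak{n}$ can annihilate $x$, so $x/1 \neq 0$ in $K_{(\mathfrak{n})}$, contradicting $K_{(\mathfrak{n})} = 0$. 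Hence $K = 0$, so $\varphi$ is injective, and a final application of Proposition~\ref{Herrmannflat} yields that $S$ is flat over $R$.

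In summary, the only genuinely graded content is the bookkeeping: verifying the tensor-localization compatibility $(S\otimes_R N)_{(\mathfrak{n})} \cong S_{(\mathfrak{n})}\otimes_{R_{(\mathfrak{n}\cap R)}} N_{(\mathfrak{n})}$ on homogeneous components and the graded local-global principle for vanishing of modules. Once these are in place the logical skeleton is identical to Richman's, with Proposition~\ref{Herrmannflat} supplying the passage between flatness and preservation of graded injections at both the global and the graded-local level.
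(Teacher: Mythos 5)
Your argument is correct and follows essentially the same route as the paper's proof: reduce to checking that graded inclusions are preserved via Proposition~\ref{Herrmannflat}, identify $(S\otimes_R N)_{(\mathfrak{n})}$ with $S_{(\mathfrak{n})}\otimes_{R_{(\mathfrak{n}\cap R)}}N_{(\mathfrak{n})}$, and conclude $K_{(\mathfrak{n})}=0$ for all graded maximal $\mathfrak{n}$, hence $K=0$. You merely supply more detail than the paper does on the forward implication and on the graded local-global vanishing step (via the annihilator of a nonzero homogeneous element), both of which are handled correctly.
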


Using the above lemma, we obtain a graded version of~\cite[Theorem 2]{Ric-genqr}.

\begin{proposition}\label{gradedRichThm2}  Let $R$ be a $G$-graded domain where $G$ is a torsion-free abelian group. Let $S$ be a graded overring of $R$. Then the following are equivalent, 
\begin{enumerate} 
\item $S$ is a flat overring of $R$.
\item $S_{(\mathfrak{p})} = R_{(\mathfrak{p} \cap R)}$ for all $\mathfrak{p} \in h\textnormal{-Spec}(S)$.
\item $S_{(\mathfrak{n})} = R_{(\mathfrak{n} \cap R)}$ for all $\mathfrak{n} \in  h\textnormal{-Max}(S)$.
\end{enumerate} 

\begin{proof} ($1\implies 2$) Suppose that $S$ is flat over $R$. Let $\mathfrak{p} \in h\textnormal{-Spec}(S)$. Then $\mathfrak{p} \in \textnormal{Spec}(S)$. By~\cite[Theorem 2]{Ric-genqr}, $S_{\mathfrak{p}} = R_{\mathfrak{p} \cap R}$. Because $S_{(\mathfrak{p})} = R_H \cap S_{\mathfrak{p}}$ and $R_{(\mathfrak{p} \cap R)} = R_H \cap R_{\mathfrak{p} \cap R}$, it follows that $S_{(\mathfrak{p})} = R_{(\mathfrak{p} \cap R)}$.

($2 \implies 3$) A graded maximal ideal is a graded prime ideal so there is nothing to prove.

($3 \implies 1$) Suppose that $S_{(\mathfrak{n})} = R_{(\mathfrak{n} \cap R)}$ for all $\mathfrak{n} \in  h\textnormal{-Max}(S)$. Then $S_{(\mathfrak{n})}$ is flat over $R_{(\mathfrak{n} \cap R)}$ for all $\mathfrak{n} \in  h\textnormal{-Max}(S)$. Thus $S$ is flat over $R$ by Lemma~\ref{lemma-gr-rich}.

\end{proof}
\end{proposition}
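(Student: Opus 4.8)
The plan is to prove the equivalence through the cyclic chain $(1)\implies(2)\implies(3)\implies(1)$, letting the ungraded flatness criterion~\cite[Theorem 2]{Ric-genqr} and its graded analog Lemma~\ref{lemma-gr-rich} carry the substantive content while a single translation identity bridges homogeneous and ordinary localizations.

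For $(1)\implies(2)$ I would begin by noting that a flat graded overring $S$ is, by definition, flat as an ordinary $R$-module, so the ungraded theorem of Richman applies directly to the overring extension $R \subseteq S \subseteq \mathrm{Frac}(R)$ and yields $S_{\mathfrak{p}} = R_{\mathfrak{p}\cap R}$ for every prime $\mathfrak{p}$ of $S$, and in particular for every $\mathfrak{p}\in h\textnormal{-Spec}(S)$. The crucial step is then to descend from these ordinary localizations to homogeneous ones: using that $S_{(\mathfrak{p})} = R_H \cap S_{\mathfrak{p}}$ and $R_{(\mathfrak{p}\cap R)} = R_H \cap R_{\mathfrak{p}\cap R}$, I would intersect both sides of $S_{\mathfrak{p}} = R_{\mathfrak{p}\cap R}$ with the graded quotient field $R_H$ to conclude $S_{(\mathfrak{p})} = R_{(\mathfrak{p}\cap R)}$.

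The implication $(2)\implies(3)$ requires no argument, since every graded maximal ideal is a graded prime, so the hypothesis of $(2)$ is simply restricted to the subset $h\textnormal{-Max}(S)\subseteq h\textnormal{-Spec}(S)$. For $(3)\implies(1)$ I would observe that the equality $S_{(\mathfrak{n})} = R_{(\mathfrak{n}\cap R)}$ makes each localized inclusion an identity of rings, so $S_{(\mathfrak{n})}$ is trivially flat over $R_{(\mathfrak{n}\cap R)}$ for every $\mathfrak{n}\in h\textnormal{-Max}(S)$; Lemma~\ref{lemma-gr-rich} then promotes this pointwise flatness to flatness of $S$ over $R$, closing the cycle.

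The only genuinely delicate point, and the step I expect to demand the most care, is the localization identity $S_{(\mathfrak{p})} = R_H \cap S_{\mathfrak{p}}$ (together with its counterpart for $R$) invoked in $(1)\implies(2)$: one must verify that the homogeneous localization is recovered exactly by intersecting the ordinary localization with the graded quotient field. Everything else is bookkeeping, since the hypothesis of $(1)$ already furnishes module flatness so that~\cite[Theorem 2]{Ric-genqr} is immediately applicable, and $(3)\implies(1)$ merely feeds an equality of rings into Lemma~\ref{lemma-gr-rich}.
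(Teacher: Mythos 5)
Your proposal is correct and follows essentially the same route as the paper: applying Richman's ungraded theorem and intersecting with $R_H$ for $(1)\implies(2)$, the trivial inclusion for $(2)\implies(3)$, and Lemma~\ref{lemma-gr-rich} for $(3)\implies(1)$. The paper's proof uses exactly the same identities $S_{(\mathfrak{p})} = R_H \cap S_{\mathfrak{p}}$ and $R_{(\mathfrak{p}\cap R)} = R_H \cap R_{\mathfrak{p}\cap R}$, so there is nothing substantive to add.
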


This graded version of~\cite[Theorem 2]{Ric-genqr} is useful when working with graded overrings.

In~\cite{HaSa-grprINCext}, Sahandi and Hamdi define a graded overring $T$ of a graded domain $R$ (graded by a 
torsionless grading monoid) to be $h$-flat if for every graded prime ideal $\mathfrak{q}$ of $T$, $R_{\mathfrak{q} \cap T} = T_\mathfrak{q}$.

\begin{proposition} Let $R$ be a $\Gamma$-graded integral domain where $\Gamma$ is an arbitrary torsionless grading monoid. A graded overring $T$ of $R$ is $h$-flat over $R$ if and only if $T$ is flat over $R$.

\begin{proof} We may assume $R$ is $G$-graded where $G$ is a torsion-free abelian group by letting $R_{\gamma} = \{0\}$ for any $\gamma \in \{a - b \, |\, a, b \in \Gamma\}\backslash \Gamma$. Let $T$ be graded overring of $R$. Suppose $T$ is $h$-flat. Then for every $\mathfrak{q}\in h\textnormal{-Spec}(T)$, $R_{\mathfrak{q} \cap T} = T_\mathfrak{q}$. It follows that for every $\mathfrak{q}\in h\textnormal{-Spec}(T)$, $R_{(\mathfrak{q} \cap T)} = T_{(\mathfrak{q})}$. Hence $T$ is flat over $R$ by Proposition~\ref{gradedRichThm2}. 

That flatness implies $h$-flatness is clear by~\cite[Theorem 2]{Ric-genqr}
\end{proof}
\end{proposition}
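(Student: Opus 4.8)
The plan is to observe that both sides of the equivalence are governed by the same homogeneous localizations, so that each direction collapses to a condition already handled by Proposition~\ref{gradedRichThm2} together with Richman's ungraded criterion~\cite[Theorem 2]{Ric-genqr}. First I would carry out the reduction already announced at the start of the stated proof: since $\langle\Gamma\rangle$ is a torsion-free abelian group, setting $R_\gamma = \{0\}$ for every $\gamma \in \langle\Gamma\rangle\setminus\Gamma$ makes $R$ (and hence the graded overring $T$) into a $G$-graded domain with $G$ torsion-free abelian, without altering $h\textnormal{-Spec}$, the graded quotient field $R_H$, or any of the localizations involved. This is exactly what licenses the use of Proposition~\ref{gradedRichThm2} and Lemma~\ref{lemma-gr-rich} below.

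For the forward direction I would assume $T$ is $h$-flat, so that $R_{\mathfrak{q}\cap R} = T_{\mathfrak{q}}$ as \emph{ordinary} localizations for every $\mathfrak{q}\in h\textnormal{-Spec}(T)$. The key step is the intersection formula already used in the proof of $(1\implies 2)$ of Proposition~\ref{gradedRichThm2}, namely that the homogeneous localization is the degree-respecting part of the full localization: $T_{(\mathfrak{q})} = R_H \cap T_{\mathfrak{q}}$ and $R_{(\mathfrak{q}\cap R)} = R_H \cap R_{\mathfrak{q}\cap R}$. Intersecting the $h$-flat equality $R_{\mathfrak{q}\cap R} = T_{\mathfrak{q}}$ with $R_H$ then yields $T_{(\mathfrak{q})} = R_{(\mathfrak{q}\cap R)}$ for every $\mathfrak{q}\in h\textnormal{-Spec}(T)$, which is precisely condition $(2)$ of Proposition~\ref{gradedRichThm2}. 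Hence $T$ is a flat overring of $R$.

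For the reverse direction I would assume $T$ is flat over $R$. Then $T$ is a flat overring in the ordinary (ungraded) sense, so Richman's criterion~\cite[Theorem 2]{Ric-genqr} gives $T_{\mathfrak{p}} = R_{\mathfrak{p}\cap R}$ for every $\mathfrak{p}\in\textnormal{Spec}(T)$. Restricting to graded primes, and noting $h\textnormal{-Spec}(T)\subseteq\textnormal{Spec}(T)$, this specializes to $T_{\mathfrak{q}} = R_{\mathfrak{q}\cap R}$ for all $\mathfrak{q}\in h\textnormal{-Spec}(T)$, which is exactly the defining condition of $h$-flatness.

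The genuinely delicate point, and the one I would take care to justify or cite, is the passage between ordinary and homogeneous localizations through $R_H$, i.e. the identity $T_{(\mathfrak{q})} = R_H \cap T_{\mathfrak{q}}$; everything else is bookkeeping once the reduction to a torsion-free abelian grading group is in place. Since this identity is already established and used in Proposition~\ref{gradedRichThm2}, I expect the proof to be short, with no substantive obstacle beyond matching the two families of localizations carefully.
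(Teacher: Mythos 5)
Your proposal is correct and follows essentially the same route as the paper: reduce to a torsion-free abelian grading group, pass from the $h$-flatness equalities $T_{\mathfrak{q}} = R_{\mathfrak{q}\cap R}$ to the homogeneous localizations via $T_{(\mathfrak{q})} = R_H \cap T_{\mathfrak{q}}$, invoke Proposition~\ref{gradedRichThm2} for flatness, and deduce the converse from~\cite[Theorem 2]{Ric-genqr} by restricting to graded primes. The only difference is that you make explicit the intersection-with-$R_H$ step that the paper compresses into ``it follows that,'' which is a harmless elaboration.
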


\begin{remark} In~\cite[Corollary 3.6]{ACZ-grpruf}, Anderson et al show that a graded integral domain (graded by a 
torsionless commutative cancellative monoid) is a graded Pr\"{u}fer domain if and only if every graded overring is $t$-flat. Let $R$ be a graded Pr\"{u}fer domain, $S$ a graded overring of $R$ and $M \in  h\textnormal{-Max}(S)$. By~\cite[Theorem 3.5]{ACZ-grpruf}, it follows that $S_{(M)} = R_{(M \cap R)}$. Thus $S$ is flat over $R$ by Proposition~\ref{gradedRichThm2}. So, in a graded Pr\"{u}fer domain, every graded overring is flat.
\end{remark}

\subsection{Graded perinormality and graded global perinormality}

Recall that an integral domain $R$ is \emph{perinormal} if every local going-down overring is a localization of $R$ and \emph{globally perinormal} if every going-down overring is a localization of $R$. We define graded analogs of these notions and show that many of the results proved by Epstein and Shapiro in~\cite{EpSh-peri} for perinormal and globally perinormal domains have graded analogs.

\begin{definition} A $G$-graded domain $R$ is a \textbf{\emph{graded perinormal domain}} if every gr-local gGD graded overring is a localization of $R$.
\end{definition}

\begin{definition} A $G$-graded domain $R$ is a \emph{graded globally perinormal domain} if every gGD graded overring is a localization of $R$.
\end{definition}

With graded perinormality and graded global perinormality defined as above, we immediately obtain analogs of results from~\cite{EpSh-peri} by substituting in the graded analogs where appropriate in the proofs.

\begin{proposition}\label{gr-prop-2.5-1} Let $R$ be a $G$-graded domain. If $R_{(\mathfrak{m})}$ is a graded perinormal domain for all $\mathfrak{m} \in  h\textnormal{-Max}(R)$, then $R$ is a graded perinormal domain. If $R$ is a graded perinormal domain, then $R_W$ is a graded perinormal domain for every homogeneous multiplicative subset $W \subseteq R$.
\end{proposition}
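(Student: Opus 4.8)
The plan is to mirror the non-graded argument of Epstein and Shapiro, substituting the graded tools developed above: Observation~\ref{gr-goingdown-surj}, the observation that a localization of a gGD graded overring is again gGD, and Proposition~\ref{gradedRichThm2}. Two facts are used throughout. First, a flat graded overring satisfies gGD: a flat overring satisfies going-down, and, as noted after the definition of gGD, going-down implies gGD; in particular every homogeneous localization $R_W$ is a gGD overring of $R$. Second, in a gr-local graded domain a homogeneous element is a unit if and only if it lies outside the graded maximal ideal.

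For the first implication, let $S$ be a gr-local gGD graded overring of $R$ with graded maximal ideal $\mathfrak{n}$, put $\mathfrak{p} = \mathfrak{n} \cap R$, and choose $\mathfrak{m} \in h\textnormal{-Max}(R)$ with $\mathfrak{p} \subseteq \mathfrak{m}$. I would first verify $R_{(\mathfrak{m})} \subseteq S$: any homogeneous $s \in R \setminus \mathfrak{m}$ lies outside $\mathfrak{p} = \mathfrak{n} \cap R$, hence outside $\mathfrak{n}$, and so is a unit in $S$; thus all homogeneous elements inverted to form $R_{(\mathfrak{m})}$ are already units of $S$. Applying the observation on localizations of gGD overrings with $T = S$ and $W$ the multiplicative set generated by the homogeneous elements of $R \setminus \mathfrak{m}$ then gives that $R_{(\mathfrak{m})} \subseteq S_W = S$ is gGD, where $S_W = S$ since $W$ consists of units of $S$. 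As $S$ is gr-local and $R_{(\mathfrak{m})}$ is graded perinormal by hypothesis, $S$ is a localization of $R_{(\mathfrak{m})}$; since $R_{(\mathfrak{m})}$ is itself a localization of $R$, $S$ is a localization of $R$.

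For the second implication, suppose $R$ is graded perinormal, let $W \subseteq R$ be a homogeneous multiplicative subset, and let $S$ be a gr-local gGD graded overring of $R_W$. Then $R \subseteq R_W \subseteq S \subseteq R_H$, so $S$ is a graded overring of $R$. The key step is that $R \subseteq S$ is gGD, which I would establish through the surjectivity criterion of Observation~\ref{gr-goingdown-surj}: for $Q \in h\textnormal{-Spec}(S)$, factor the map $h\textnormal{-Spec}(S_{(Q)}) \to h\textnormal{-Spec}(R_{(Q \cap R)})$ through $h\textnormal{-Spec}\bigl((R_W)_{(Q \cap R_W)}\bigr)$. The map onto $h\textnormal{-Spec}\bigl((R_W)_{(Q \cap R_W)}\bigr)$ is surjective because $R_W \subseteq S$ is gGD, and since $R_W$ is flat over $R$ Proposition~\ref{gradedRichThm2} identifies $(R_W)_{(Q \cap R_W)}$ with $R_{(Q \cap R)}$; hence the composite is surjective. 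Graded perinormality of $R$ now gives that the gr-local ring $S$ equals $R_{(\mathfrak{p})}$ for some $\mathfrak{p} \in h\textnormal{-Spec}(R)$. Finally, the elements of $W$ are units in $R_W \subseteq S = R_{(\mathfrak{p})}$, so $W \cap \mathfrak{p} = \emptyset$; thus $\mathfrak{p}$ extends to a graded prime of $R_W$ and iterated localization yields $S = R_{(\mathfrak{p})} = (R_W)_{(\mathfrak{p} R_W)}$, a localization of $R_W$.

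The identifications of iterated homogeneous localizations and the tracking of contraction primes are routine. The main obstacle I anticipate is handling the descent and transitivity of gGD through localization cleanly: matching the intermediate graded localizations (using flatness and Proposition~\ref{gradedRichThm2} to collapse $(R_W)_{(\cdot)}$ onto $R_{(\cdot)}$) so that the criterion of Observation~\ref{gr-goingdown-surj} applies, while checking at each stage that the relevant rings remain gr-local so that graded perinormality of the localized ring may actually be invoked.
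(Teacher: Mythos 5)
Your proof is correct and follows exactly the route the paper intends: the paper's own proof is just the statement that one transports \cite[Proposition 2.5]{EpSh-peri} to the graded setting, and your argument is precisely that translation, using the unlabeled observation on localizing gGD extensions, Observation~\ref{gr-goingdown-surj}, and Proposition~\ref{gradedRichThm2} in the expected places. The only difference is that you supply the details the paper omits.
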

\begin{proof} The proof is parallel to that of~\cite[Proposition 2.5]{EpSh-peri} making appropriate graded substitutions. 
\end{proof}

\begin{proposition}\label{gr-prop-6.1} Let $R$ be a $G$-graded domain. If $R$ is a graded globally perinormal domain, then $R_W$ is a graded globally perinormal domain for each homogeneous multiplicative subset $W$.
\end{proposition}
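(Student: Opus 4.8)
The plan is to take an arbitrary gGD graded overring $T$ of $R_W$, show that $T$ is automatically a gGD graded overring of $R$, invoke the graded global perinormality of $R$ to write $T$ as a localization of $R$, and finally observe that such a localization is also a localization of $R_W$.

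First I would record the setup. Since $W$ consists of nonzero homogeneous elements, $R_W$ is a graded overring of $R$ contained in $R_H$, and being a localization it is flat over $R$; thus Proposition~\ref{gradedRichThm2} gives $(R_W)_{(\mathfrak{p})} = R_{(\mathfrak{p} \cap R)}$ for every $\mathfrak{p} \in h\textnormal{-Spec}(R_W)$. Given a gGD graded overring $T$ of $R_W$, the chain $R \subseteq R_W \subseteq T \subseteq R_H$ shows that $T$ is itself a graded overring of $R$, so it makes sense to test it against the definition of graded global perinormality.

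The key step is to verify that $T$ is gGD over $R$, which I would do through Observation~\ref{gr-goingdown-surj}. Fix $\tilde{Q} \in h\textnormal{-Spec}(T)$ and set $\mathfrak{p} = \tilde{Q} \cap R_W \in h\textnormal{-Spec}(R_W)$, so that $\mathfrak{p} \cap R = \tilde{Q} \cap R$. Because $T$ is gGD over $R_W$, Observation~\ref{gr-goingdown-surj} tells us the contraction map $h\textnormal{-Spec}(T_{(\tilde{Q})}) \longrightarrow h\textnormal{-Spec}((R_W)_{(\mathfrak{p})})$ is surjective. By the flatness identity above, $(R_W)_{(\mathfrak{p})} = R_{(\mathfrak{p} \cap R)} = R_{(\tilde{Q} \cap R)}$ as subrings of $R_H$, and under this identification the displayed map is precisely the contraction $h\textnormal{-Spec}(T_{(\tilde{Q})}) \longrightarrow h\textnormal{-Spec}(R_{(\tilde{Q} \cap R)})$. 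Hence that map is surjective for every $\tilde{Q} \in h\textnormal{-Spec}(T)$, and Observation~\ref{gr-goingdown-surj} yields that $T$ is a gGD graded overring of $R$. Now graded global perinormality of $R$ applies, so $T = R_U$ for some homogeneous multiplicative subset $U$ of $R$. To conclude, I would push this localization down to $R_W$: since $R_W \subseteq T = R_U$, every element of $W$ is a unit in $R_U$, so writing $\bar{U}$ for the image of $U$ in $R_W$ we obtain $(R_W)_{\bar{U}} = R_{WU} = R_U = T$, exhibiting $T$ as a localization of $R_W$.

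I expect the main obstacle to be the middle step, namely confirming that gGD of $T$ over $R_W$ descends to gGD of $T$ over $R$. Everything hinges on the flatness of $R_W$ over $R$, which through Proposition~\ref{gradedRichThm2} collapses the intermediate localized base ring $(R_W)_{(\mathfrak{p})}$ onto $R_{(\tilde{Q} \cap R)}$ and lets the surjectivity criterion of Observation~\ref{gr-goingdown-surj} pass through unchanged; once that identification is in hand, the opening reduction and the closing localization argument are routine.
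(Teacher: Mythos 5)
Your proposal is correct and follows essentially the same route as the paper, which defers to the non-graded argument of Epstein--Shapiro's Proposition 6.1 with graded substitutions: the surjectivity criterion of Observation~\ref{gr-goingdown-surj} together with the flatness identity $(R_W)_{(\mathfrak{p})} = R_{(\mathfrak{p}\cap R)}$ shows a gGD graded overring of $R_W$ is gGD over $R$, and the final step (a localization of $R$ containing $R_W$ is a localization of $R_W$) is exactly what the paper's citation of Gilmer--Ohm handles.
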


\begin{proof} Let $W$ a homogeneous multiplicative subset of $R$. Noting that $R_W$ is a graded overring of $R$ and any graded overring of $R_W$ is a graded overring of $R$, the proof is parallel to that of~\cite[Proposition 6.1]{EpSh-peri} using Observation~\ref{gr-goingdown-surj} and ~\cite[Proposition 1.3]{GiOh-qr} along with the usual appropriate graded substitutions. 
\end{proof}

\begin{proposition}\label{gradedperiprop2.4} Let $R$ be a $G$-graded domain. Then $R$ is a graded perinormal domain if and only if every graded overring that satisfies gGD is flat. 
\end{proposition}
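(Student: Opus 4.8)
The plan is to prove both implications by reducing to the gr-local case, where the definition of graded perinormality and the graded flatness criterion of Proposition~\ref{gradedRichThm2} apply directly. The two facts driving the argument are that flatness of a graded overring is a homogeneous-local condition (Lemma~\ref{lemma-gr-rich}), and that a gr-local flat graded overring is forced to coincide with a homogeneous localization of the base ring (Proposition~\ref{gradedRichThm2}).

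For the reverse implication I would assume every gGD graded overring of $R$ is flat, and let $S$ be a gr-local gGD graded overring with graded maximal ideal $\mathfrak{n}$. By hypothesis $S$ is flat over $R$, so Proposition~\ref{gradedRichThm2} gives $S_{(\mathfrak{p})} = R_{(\mathfrak{p}\cap R)}$ for every $\mathfrak{p} \in h\textnormal{-Spec}(S)$. Applying this to $\mathfrak{p} = \mathfrak{n}$, and using that a gr-local domain equals its own homogeneous localization at the complement of its graded maximal ideal (every homogeneous element outside $\mathfrak{n}$ generates a proper homogeneous ideal, which would lie in a graded maximal ideal and hence in $\mathfrak{n}$, a contradiction, so such elements are already units and $S = S_{(\mathfrak{n})}$), I obtain $S = R_{(\mathfrak{n}\cap R)}$, a localization of $R$. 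Hence $R$ is graded perinormal.

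For the forward implication I would assume $R$ is graded perinormal and take an arbitrary gGD graded overring $S$, aiming to show $S$ is flat over $R$. By Lemma~\ref{lemma-gr-rich} it suffices to show $S_{(\mathfrak{n})}$ is flat over $R_{(\mathfrak{n}\cap R)}$ for each $\mathfrak{n} \in h\textnormal{-Max}(S)$. Letting $W$ be the homogeneous elements of $S$ outside $\mathfrak{n}$, the observation that gGD descends to homogeneous localizations shows that $R_{(\mathfrak{n}\cap R)} \subseteq S_{(\mathfrak{n})}$ again satisfies gGD, and $S_{(\mathfrak{n})}$ is gr-local by construction. Since $R_{(\mathfrak{n}\cap R)}$ is a homogeneous localization of the graded perinormal domain $R$, it is itself graded perinormal by Proposition~\ref{gr-prop-2.5-1}; therefore the gr-local gGD graded overring $S_{(\mathfrak{n})}$ of $R_{(\mathfrak{n}\cap R)}$ is a localization of $R_{(\mathfrak{n}\cap R)}$, and any localization is flat. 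This yields flatness of $S_{(\mathfrak{n})}$ over $R_{(\mathfrak{n}\cap R)}$ for all $\mathfrak{n}$, and hence flatness of $S$.

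The main obstacle I anticipate is the bookkeeping in the forward direction: one must verify that $S_{(\mathfrak{n})}$ is genuinely a gr-local gGD graded overring of the graded perinormal ring $R_{(\mathfrak{n}\cap R)}$, so that the \emph{definition} of graded perinormality applies to $R_{(\mathfrak{n}\cap R)}$ rather than to $R$ itself. This relies on the localization stability of both gGD (the homogeneous-localization observation) and of graded perinormality (Proposition~\ref{gr-prop-2.5-1}), together with the compatibility of the contraction $\mathfrak{n}\cap R$ with $\mathfrak{n}\cap R_{(\mathfrak{n}\cap R)}$. Once these are in place, the reduction to the gr-local case and the final appeal to Proposition~\ref{gradedRichThm2} are routine.
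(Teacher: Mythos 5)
Your proof is correct and follows essentially the same route as the paper's, which simply carries over the argument of Epstein--Shapiro's Proposition 2.4 with graded substitutions: reduce to the gr-local case and apply the graded Richman flatness criterion (Proposition~\ref{gradedRichThm2} together with Lemma~\ref{lemma-gr-rich}). The only cosmetic difference is that in the forward direction you invoke graded perinormality of $R_{(\mathfrak{n}\cap R)}$ via Proposition~\ref{gr-prop-2.5-1} rather than applying perinormality of $R$ directly to the gr-local gGD overring $S_{(\mathfrak{n})}$ and identifying it with $R_{(\mathfrak{n}\cap R)}$.
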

\begin{proof} The proof is parallel to that of~\cite[Proposition 2.4]{EpSh-peri} making appropriate graded substitutions and using Proposition~\ref{gradedRichThm2}, the graded version of~\cite[Theorem 2]{Ric-genqr}, in place of~\cite[Theorem 2]{Ric-genqr}.
\end{proof}

\begin{proposition}\label{gradedperi-prop3.3} Let $(R,\mathfrak{m})$ be a gr-local graded perinormal domain. Let $S$ be a graded overring that satisfies gGD such that there exists $P \in  h\textnormal{-Spec}(S)$ such that $P \cap R = \mathfrak{m}$. Then $S = R$.
\end{proposition}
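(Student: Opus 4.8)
The plan is to use graded perinormality to force flatness of $S$ over $R$, then read off the homogeneous localization of $S$ at $P$ via the graded Richman criterion, and finally squeeze $S$ between $R$ and a homogeneous localization that collapses back to $R$. The heavy lifting is entirely supplied by the two results already established earlier in the section, so the argument is short.

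First I would invoke Proposition~\ref{gradedperiprop2.4}: since $R$ is graded perinormal and $S$ is a graded overring satisfying gGD, $S$ is flat over $R$. Then Proposition~\ref{gradedRichThm2}, the graded form of Richman's theorem, applies and gives $S_{(\mathfrak{q})} = R_{(\mathfrak{q}\cap R)}$ for every $\mathfrak{q} \in h\textnormal{-Spec}(S)$. Specializing to the distinguished prime $P$ and using the hypothesis $P \cap R = \mathfrak{m}$, I obtain
$$S_{(P)} = R_{(P \cap R)} = R_{(\mathfrak{m})}.$$

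Next I would check that $R_{(\mathfrak{m})} = R$, which is where gr-locality enters. If $x$ is a nonzero homogeneous element of $R$ with $x \notin \mathfrak{m}$ and $x$ were not a unit, then the homogeneous principal ideal $xR$ would be proper; but every proper homogeneous ideal is contained in the unique graded maximal ideal $\mathfrak{m}$, forcing $x \in \mathfrak{m}$, a contradiction. Hence every homogeneous element lying outside $\mathfrak{m}$ is already a unit of $R$, so inverting such elements adds nothing and $R_{(\mathfrak{m})} = R$.

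Finally, since $S$ is a graded overring of $R$ sitting inside the graded quotient field $R_H$, we always have the chain of subring inclusions $R \subseteq S \subseteq S_{(P)}$. Combining this with the two displayed identities yields
$$R \subseteq S \subseteq S_{(P)} = R_{(\mathfrak{m})} = R,$$
so every inclusion is an equality and $S = R$. There is no serious obstacle here: the only step requiring genuine care is the gr-local collapse $R_{(\mathfrak{m})} = R$, namely verifying that in a gr-local domain each homogeneous element off the graded maximal ideal is a unit, which is exactly what makes the homogeneous localization at $\mathfrak{m}$ trivial.
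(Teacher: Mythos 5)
Your proof is correct, and it ends with the same sandwich $R \subseteq S \subseteq S_{(P)} = R_{(\mathfrak{m})} = R$ as the paper's, but the way you get the key identity $S_{(P)} = R_{(\mathfrak{m})}$ is genuinely different. The paper applies the definition of graded perinormality directly to the gr-local overring $S_{(P)}$: it first argues that $S_{(P)}$ satisfies gGD over $R$ (by composing the gGD hypothesis on $S$ with the fact that the homogeneous localization $S_{(P)}$ satisfies going-down over $S$), concludes that $S_{(P)}$ is a homogeneous localization of $R$, and then identifies that localization as $R_{(\mathfrak{m})}$ using gr-locality. You instead invoke Proposition~\ref{gradedperiprop2.4} to get flatness of $S$ itself over $R$ and then read off $S_{(P)} = R_{(P\cap R)}$ from the graded Richman criterion (Proposition~\ref{gradedRichThm2}). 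Your route buys you two small simplifications: you never need to verify that gGD is preserved under passing to $S_{(P)}$, and you never need to identify which localization of $R$ the ring $S_{(P)}$ is --- Richman hands you the equality at the prime $P$ directly. The cost is that you lean on the flatness characterization, which is a heavier tool than the bare definition; the paper's argument stays closer to first principles. Your verification that $R_{(\mathfrak{m})} = R$ in a gr-local domain (every homogeneous element outside $\mathfrak{m}$ is a unit, since otherwise it generates a proper homogeneous ideal contained in the unique graded maximal ideal) is the same implicit step the paper uses when it writes $R = R_{(\mathfrak{m})}$, and it is handled correctly.
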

\begin{proof} Let $(R,\mathfrak{m})$ be a gr-local graded perinormal domain. Let $S$ be a graded overring that satisfies gGD such that there exists $P \in  h\textnormal{-Spec}(S)$ such that $P \cap R = \mathfrak{m}$. Note that $S_{(P)}$ is a gr-local graded overring that satisfies gGD over $R$ (because $S$ satisfies gGD over $R$ and $S_{(P)}$ satisfies going-down and hence gGD over $S$). Because $R$ is a graded perinormal domain, it follows that $S_{(P)}$ is a homogeneous localization of $R$. Hence $S_{(P)} = R_{(\mathfrak{m})}$. Thus
$$R = R_{(\mathfrak{m})} \subseteq S \subseteq S_{(P)} = R_{(\mathfrak{m})} = R.$$
So $S = R$.
\end{proof}

An integral domain $R$ is \emph{weakly normal} if for every integral overring $S$ such that the map $\textnormal{Spec}(S) \longrightarrow \textnormal{Spec}(R)$ is a bijection and for all $P \in \textnormal{Spec}(S)$, the field extension  $$R_{P \cap R} \slash (P \cap R)R_{P \cap R} \longrightarrow S_P \slash PS_P$$ is purely inseparable, $S = R$~\cite[Remark 1]{Yan-weaklynormal}. From Proposition~\ref{gradedperi-prop3.3}, we can prove that for a graded integral domain, graded perinormality implies weak normality.

\begin{proposition}\label{gradedperi-prop3.4} Let $R$ be a $G$-graded domain. If $R$ is a graded perinormal domain, then $R$ is weakly normal.
\end{proposition}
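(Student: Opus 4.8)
The plan is to show that $R$ coincides with its weak normalization ${}^{*}R$ taken inside $K = \textnormal{Frac}(R)$; this suffices, since any integral overring $S$ as in the definition of weak normality is weakly subintegral over $R$ and hence contained in ${}^{*}R$, so ${}^{*}R = R$ forces $S = R$. First I would record that ${}^{*}R$ is in fact a \emph{graded} overring of $R$. Since ${}^{*}R$ is integral over $R$ it lies in the integral closure $\overline{R}$ of $R$ in $K$, and because $G$ is torsion-free the integral closure of a graded domain is graded and contained in the graded quotient field $R_H$ (every graded subring of $K$ consists of homogeneous elements and so sits inside $R_H$); the weak normalization inherits this grading, so $R \subseteq {}^{*}R \subseteq R_H$ is a graded overring. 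Next, since $R \subseteq {}^{*}R$ is weakly subintegral, the map $\textnormal{Spec}({}^{*}R) \to \textnormal{Spec}(R)$ is a continuous closed bijection, hence a homeomorphism; a homeomorphism of spectra preserves inclusions of primes, so $R \subseteq {}^{*}R$ satisfies going-down. By the remark preceding Observation~\ref{gr-goingdown-surj}, a graded overring satisfying going-down satisfies gGD, so ${}^{*}R$ is a gGD graded overring of $R$.

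Now I would run the localize-and-apply-Proposition~\ref{gradedperi-prop3.3} argument. Fix $\mathfrak{m} \in h\textnormal{-Max}(R)$ and let $W$ be the set of homogeneous elements of $R$ outside $\mathfrak{m}$. By the second part of Proposition~\ref{gr-prop-2.5-1}, $R_{(\mathfrak{m})} = R_W$ is a gr-local graded perinormal domain with graded maximal ideal $\mathfrak{m}R_{(\mathfrak{m})}$. The homogeneous localization $({}^{*}R)_{(\mathfrak{m})}$ is a graded overring of $R_{(\mathfrak{m})}$, and it still satisfies gGD by the observation that gGD is preserved under homogeneous localization. Because $({}^{*}R)_{(\mathfrak{m})}$ is integral over $R_{(\mathfrak{m})}$, graded lying-over produces a graded prime $P$ of $({}^{*}R)_{(\mathfrak{m})}$ with $P \cap R_{(\mathfrak{m})} = \mathfrak{m}R_{(\mathfrak{m})}$. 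Proposition~\ref{gradedperi-prop3.3} then yields $({}^{*}R)_{(\mathfrak{m})} = R_{(\mathfrak{m})}$. As this holds for every $\mathfrak{m} \in h\textnormal{-Max}(R)$, the graded local-global principle gives ${}^{*}R = R$, and therefore $R$ is weakly normal.

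The main obstacle I anticipate is the input needed to treat the non-graded notion of weak normality with the graded machinery, namely that the weak normalization of the graded domain $R$ is itself a graded overring contained in $R_H$ and that $R \subseteq {}^{*}R$ satisfies (ordinary, hence graded) going-down. Gradedness of the integral closure and of the weak normalization relies on $G$ being torsion-free, and the passage from the bijectivity of $\textnormal{Spec}({}^{*}R) \to \textnormal{Spec}(R)$ with purely inseparable residue extensions to going-down uses that this spectral map is a homeomorphism. Once these facts are in place, the remaining steps are the graded analogs of standard results --- preservation of gGD under homogeneous localization, graded lying-over for integral extensions, and the graded local-global principle --- together with Propositions~\ref{gr-prop-2.5-1} and~\ref{gradedperi-prop3.3}. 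Alternatively, one could finish via Proposition~\ref{gradedperiprop2.4}: ${}^{*}R$ being a gGD graded overring is flat over $R$, and a flat integral overring equals $R$.
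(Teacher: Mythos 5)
Your proposal follows the same skeleton as the paper's (sketched) argument: reduce to the gr-local case, exhibit the relevant integral overring as a gGD graded overring possessing a graded prime over the graded maximal ideal, and invoke Proposition~\ref{gradedperi-prop3.3}. The one genuinely different organizational device is that you quantify over a single object, the weak normalization ${}^{*}R$, rather than over arbitrary integral overrings $S$ with bijective spectral map and purely inseparable residue extensions; this is actually a useful refinement, because such an $S$ need not be graded, so the graded-perinormality hypothesis (which only sees graded overrings) cannot be applied to $S$ directly. Passing to ${}^{*}R$ is precisely the right way to bridge the non-graded definition of weak normality and the graded machinery.

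However, the load-bearing step there --- that ${}^{*}R$ is itself a \emph{graded} overring --- is asserted rather than proved. Your parenthetical only justifies ${}^{*}R \subseteq \overline{R} \subseteq R_H$; being contained in a graded ring does not make a subring graded, so ``the weak normalization inherits this grading'' needs an argument or a citation. The gap is fillable: since $G$ is torsion-free, for $b = \sum_\alpha b_\alpha \in \overline{R}$ with $b^2, b^3 \in A$ (resp.\ $b^p, pb \in A$) the monomials $b_{\alpha_1}^{k_1}\cdots b_{\alpha_r}^{k_r}$ occurring in $b^2$ and $b^3$ (resp.\ $b^p$) have pairwise distinct degrees, so each homogeneous component $b_\alpha$ again satisfies $b_\alpha^2, b_\alpha^3 \in A$ (resp.\ $b_\alpha^p, pb_\alpha \in A$); iterating over the elementary weakly subintegral extensions building ${}^{*}R$ shows it is graded. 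With that supplied, the rest of your argument (Spec homeomorphism implies going-down implies gGD, preservation of gGD under homogeneous localization, graded lying-over, the graded local-global principle, and either Proposition~\ref{gradedperi-prop3.3} or the flatness route via Proposition~\ref{gradedperiprop2.4}) is sound.
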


\begin{proof} The proof is parallel to that of~\cite[Corollary 3.4]{EpSh-peri} with the appropriate graded replacements where we assume that $R$ is gr-local because a local property is a graded local property and utilize Proposition~\ref{gradedperi-prop3.3} in place of~\cite[Proposition 3.3]{EpSh-peri}.
\end{proof}

\begin{proposition}\label{gradedperiprop6.2} Let $R$ be $G$-graded domain that is a graded perinormal domain. Then $R$ is a graded globally perinormal domain if and only if every graded flat overring of $R$ is a localization of $R$.
\end{proposition}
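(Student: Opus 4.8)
The plan is to deduce this equivalence from two facts already in hand: Proposition~\ref{gradedperiprop2.4}, which says that a graded perinormal domain is precisely one for which every gGD graded overring is flat, and the observation that a \emph{flat} graded overring automatically satisfies gGD. Once both directions are phrased against these inputs, the argument is a short logical chain parallel to~\cite[Proposition 6.2]{EpSh-peri} with the usual graded substitutions, so I expect no serious obstacle beyond verifying the ``flat $\implies$ gGD'' step.

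For the forward implication, I would assume $R$ is graded globally perinormal and take an arbitrary graded flat overring $S$ of $R$. The first step is to show $S$ satisfies gGD. Here I would invoke Proposition~\ref{gradedRichThm2}: since $S$ is flat, $S_{(\mathfrak{q})} = R_{(\mathfrak{q} \cap R)}$ for every $\mathfrak{q} \in h\textnormal{-Spec}(S)$, so the map $h\textnormal{-Spec}(S_{(Q)}) \longrightarrow h\textnormal{-Spec}(R_{(Q \cap R)})$ of Observation~\ref{gr-goingdown-surj} is the identity for each $Q \in h\textnormal{-Spec}(S)$ and is in particular surjective; hence $S$ satisfies gGD over $R$. (Alternatively, one may note that a flat overring satisfies ordinary going-down, which was already remarked to imply gGD.) Since $S$ is then a gGD graded overring and $R$ is graded globally perinormal, $S$ is a localization of $R$, as desired. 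Note this direction does not actually use the standing hypothesis that $R$ is graded perinormal.

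For the converse, I would assume every graded flat overring of $R$ is a localization of $R$, and let $S$ be any gGD graded overring of $R$. Because $R$ is graded perinormal, Proposition~\ref{gradedperiprop2.4} gives that $S$ is flat over $R$. Applying the hypothesis to the flat overring $S$ then yields that $S$ is a localization of $R$. As $S$ was an arbitrary gGD graded overring, $R$ is graded globally perinormal. The only point warranting care is ensuring ``localization'' is consistently read in the graded sense (homogeneous localization at a homogeneous multiplicative set), which is the convention used throughout; with that in place the two implications close the equivalence.
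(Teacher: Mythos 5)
Your proposal is correct and follows essentially the same route as the paper, which simply transcribes the Epstein--Shapiro argument with graded substitutions: the forward direction uses that a flat graded overring satisfies gGD (via Proposition~\ref{gradedRichThm2} and Observation~\ref{gr-goingdown-surj}, or equally the remark that going-down implies gGD), and the converse uses Proposition~\ref{gradedperiprop2.4} to convert gGD into flatness. Your observation that the forward implication does not need graded perinormality is also consistent with the original non-graded proof.
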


\begin{proof} The proof is parallel to that of~\cite[Proposition 6.2]{EpSh-peri} making appropriate graded substitutions and replacing the use of~\cite[Proposition 2.4]{EpSh-peri} with the graded version given in Proposition~\ref{gradedperiprop2.4} above. 
\end{proof}

\subsubsection{Graded $R_1$-domains}

Let $\Gamma$ be an arbitrary torsionless grading monoid. In~\cite{AAC-grval}, Anderson et al define a $\Gamma$-graded domain $R$ to be a \emph{graded valuation domain} (or simply a \emph{gr-valuation domain}) if $x$ or $x^{-1}$ is in $R$ for every nonzero homogeneous $x$ in the graded quotient field of $R$.

\begin{definition} A $G$-graded domain $R$ is a \emph{gr-$(R_1)$ domain} if $R_{(P)}$ is a gr-valuation domain for every $P \in h\textnormal{-Spec}^1(R)$. 
\end{definition}

\begin{proposition}\label{gr-valuation} Let $R$ be a $G$-graded domain. Then $R$ is a gr-$(R_1)$ domain if and only if $R_P$ is a valuation domain for every $P \in h\textnormal{-Spec}^1(R)$. 
\end{proposition}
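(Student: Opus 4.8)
The plan is to reduce the biconditional to a single homogeneous prime and then carry out the comparison between the homogeneous and the ordinary localization. Since a $G$-graded domain is gr-$(R_1)$ precisely when $R_{(P)}$ is a gr-valuation domain for every $P \in h\textnormal{-Spec}^1(R)$, and the right-hand condition asks that $R_P$ be a valuation domain for every such $P$, it suffices to prove the pointwise equivalence: for a fixed $P \in h\textnormal{-Spec}^1(R)$, $R_{(P)}$ is a gr-valuation domain if and only if $R_P$ is a valuation domain. Quantifying over all $P \in h\textnormal{-Spec}^1(R)$ then yields the proposition.

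First I would fix $P$, set $V = R_{(P)}$, and assemble the relevant structure. The ring $V$ is gr-local with graded maximal ideal $\mathfrak{M} = PR_{(P)}$, and a routine localization computation shows that inverting $V \setminus \mathfrak{M}$ recovers exactly $R \setminus P$, so that $V_{\mathfrak{M}} = (R_{(P)})_{PR_{(P)}} = R_P$. Because $V$ is a homogeneous localization of $R$ its graded quotient field is again $R_H$, and since every homogeneous element of $V$ outside $\mathfrak{M}$ is already a unit we have $V = V_{(\mathfrak{M})} = R_H \cap V_{\mathfrak{M}} = R_H \cap R_P$, the intersection taken inside $K = \operatorname{Frac}(R) = \operatorname{Frac}(R_H)$ (this is the same identity $S_{(\mathfrak{p})} = R_H \cap S_{\mathfrak{p}}$ used in the proof of Proposition~\ref{gradedRichThm2}). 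The problem is thereby reduced to the following local statement: for a gr-local graded domain $(V,\mathfrak{M})$, the ring $V$ is a gr-valuation domain if and only if $V_{\mathfrak{M}}$ is a valuation domain.

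The implication from $V_{\mathfrak{M}}$ being a valuation domain to $V$ being a gr-valuation domain is the easy half, and I would argue it directly: given a nonzero homogeneous $x \in R_H$, the valuation property of $V_{\mathfrak{M}}$ forces $x \in V_{\mathfrak{M}}$ or $x^{-1} \in V_{\mathfrak{M}}$, and as both $x$ and $x^{-1}$ lie in $R_H$, the identity $V = R_H \cap V_{\mathfrak{M}}$ returns the relevant one to $V$. The converse, that $V$ being a gr-valuation domain makes $V_{\mathfrak{M}}$ a valuation domain, is where I expect the real work to lie, since one must promote comparability of homogeneous elements to comparability of arbitrary elements of $K$. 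Here I would appeal to the structure theory of gr-valuation domains in~\cite{AAC-grval}: the degree-zero component $V_0$ is a valuation domain with quotient field $(R_H)_0$, the graded quotient field $R_H$ is a graded field, and a gr-local gr-valuation domain is cut out by a genuine valuation $w$ on $K$ whose valuation ring is exactly $V_{\mathfrak{M}}$. The delicate point, and the one I would treat most carefully, is the reconciliation of the valuation on $V_0$ with the ordering on the degree monoid induced by the grading; I would lean on the cited structure theorem for this rather than reconstruct the valuation by hand.
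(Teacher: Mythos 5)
Your reduction and setup are correct, and in fact the paper's own proof is even terser: it simply observes that the statement is the definition of gr-$(R_1)$ combined with \cite[Lemma 4.3]{Sah-chargrpvmd}, which is precisely the pointwise equivalence you isolate (for a homogeneous prime $P$, $R_{(P)}$ is a gr-valuation domain if and only if $R_P$ is a valuation domain). Your identities $V_{\mathfrak{M}}=(R_{(P)})_{PR_{(P)}}=R_P$ and $V=R_{(P)}=R_H\cap R_P$ are both valid (the latter is the same fact invoked in the proof of Proposition~\ref{gradedRichThm2}), and your argument for the direction ``$R_P$ valuation $\Rightarrow$ $R_{(P)}$ gr-valuation'' is complete.

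The concern is the other direction, which carries essentially all of the content of the proposition. What you propose to cite from \cite{AAC-grval} --- that a gr-local gr-valuation domain ``is cut out by a genuine valuation $w$ on $K$ whose valuation ring is exactly $V_{\mathfrak{M}}$'' --- is not a neutral structural input: a subring of $K$ is a valuation ring precisely when it is the ring of some valuation on $K$, so this assertion is a restatement of the implication you are trying to prove. The delicate step you correctly identify (extending the homogeneous valuation attached to $V$ to all of $K$, typically by taking the minimum over homogeneous components, then checking both that this is actually a valuation and that its valuation ring equals $V_{\mathfrak{M}}$ rather than merely containing it) is exactly what such a lemma must supply, and you neither carry it out nor pin down a precise statement in \cite{AAC-grval} that does. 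As written, the substantive half of the proof is a promissory note. The fix is easy: either cite \cite[Lemma 4.3]{Sah-chargrpvmd} (or another precise reference) for the equivalence itself, as the paper does, or actually construct the extended valuation and verify the two points above.
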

\begin{proof} The result follows immediately from the definition of gr-$(R_1)$ and~\cite[Lemma 4.3]{Sah-chargrpvmd}.
\end{proof}

Clearly a $G$-graded domain that is an $(R_1)$ domain is a gr-$(R_1)$ domain.

\begin{lemma}\label{lem-gr-v-max} Let $R$ be a $G$-graded domain. For any $P \in h\textnormal{-Spec}(R)$, there is a graded overring $T$ that is a gr-valuation domain with graded maximal ideal that contracts to $P$.

\begin{proof} Let $P \in h\textnormal{-Spec}(R)$. By~\cite[Lemma 4.4]{ChOh-disvalgrNo}, there exists graded overring $V$ that is a gr-valuation domain with $Q \in  h\textnormal{-Spec}(V)$ such that $Q\cap R = P$. Then $T = V_{(P)}$ is a gr-valuation domain whose graded maximal ideal contracts to $P$.
\end{proof}
\end{lemma}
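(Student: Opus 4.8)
The plan is to show that for any $P \in h\textnormal{-Spec}(R)$ there is a graded overring $T$ which is a gr-valuation domain whose graded maximal ideal contracts to $P$. The statement essentially asks me to take a known existence result for gr-valuation overrings dominating a prime and then sharpen it so that the maximal ideal of the valuation overring sits exactly over $P$ rather than over some homogeneous prime merely contracting to $P$. The natural strategy is a two-step argument: first invoke the existence of \emph{some} gr-valuation overring $V$ with a graded prime lying over $P$, then pass to the homogeneous localization $V_{(P)}$ to force the maximal ideal to contract precisely to $P$.

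\medskip

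First I would cite the existence result. By~\cite[Lemma 4.4]{ChOh-disvalgrNo}, given $P \in h\textnormal{-Spec}(R)$ there is a graded overring $V$ of $R$ that is a gr-valuation domain together with a graded prime $Q \in h\textnormal{-Spec}(V)$ satisfying $Q \cap R = P$. This gives me a gr-valuation domain dominating $P$, but the \emph{graded maximal} ideal of $V$ need not contract to $P$ --- it could contract to something strictly larger. Second, I would localize: set $T = V_{(P)}$, the homogeneous localization of $V$ at the homogeneous multiplicative set $H \setminus P$ where $H$ is the set of nonzero homogeneous elements of $R$ not in $P$ (equivalently the localization of $V$ at the homogeneous complement of $P$ in $R$). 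Because $V$ is a gr-valuation domain, any homogeneous localization of $V$ remains a gr-valuation domain (the condition that $x$ or $x^{-1}$ lies in the ring is preserved under homogeneous localization), so $T$ is again a gr-valuation domain and is a graded overring of $R$. The key point is then to verify that the graded maximal ideal of $T$ contracts to exactly $P$ in $R$.

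\medskip

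The main obstacle --- and the step that warrants care rather than a one-line appeal --- is the contraction computation for the graded maximal ideal of $T$. A gr-valuation domain is gr-local, so $T$ has a unique graded maximal ideal $\mathfrak{M}$; the content to check is $\mathfrak{M} \cap R = P$. Here I would use that localizing $V$ at the homogeneous complement of $P$ kills precisely those homogeneous primes of $V$ whose contraction is not contained in $P$, so the graded primes of $T$ correspond to graded primes of $V$ contracting into $P$, and $Q$ survives. Since $T$ is gr-local with maximal ideal containing (the extension of) $Q$, and since everything outside $P$ in $R$ becomes a unit in $T$, the contraction $\mathfrak{M} \cap R$ is contained in $P$; conversely $P \subseteq Q \cap R \subseteq \mathfrak{M} \cap R$ forces equality because $\mathfrak{M}$ contains the extension of $Q$. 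Thus $\mathfrak{M} \cap R = P$, and $T = V_{(P)}$ is the desired gr-valuation overring whose graded maximal ideal contracts to $P$, completing the proof.
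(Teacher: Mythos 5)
Your proposal is correct and follows exactly the paper's argument: invoke the cited existence result to get a gr-valuation overring $V$ with a graded prime $Q$ lying over $P$, then pass to $T = V_{(P)}$. The only difference is that you spell out the (routine) verification that $T$ is still a gr-valuation domain and that its graded maximal ideal contracts to $P$, which the paper leaves implicit.
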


\begin{proposition}\label{gr-Prop3.2-peri} Let $R$ be a $G$-graded domain. If $R$ is a graded perinormal domain, then $R$ is a gr-$(R_1)$ domain.
\end{proposition}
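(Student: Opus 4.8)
The plan is to fix a graded prime $P \in h\textnormal{-Spec}^1(R)$ and produce a gr-valuation overring which graded perinormality then forces to equal $R_{(P)}$. First I would apply Lemma~\ref{lem-gr-v-max} to obtain a graded overring $T$ that is a gr-valuation domain whose unique graded maximal ideal $Q$ contracts to $P$; since the construction there yields $T = V_{(P)}$, every graded prime of $T$ contracts into $P$. Because $P$ has graded height one and $R$ is a domain, the only graded primes of $R$ contained in $P$ are $0$ and $P$, so each $Q' \in h\textnormal{-Spec}(T)$ satisfies $Q' \cap R \in \{0, P\}$. This observation is what will make the going-down argument go through.

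The crux is to show that $T$ satisfies gGD over $R$, for which I would use the surjectivity criterion of Observation~\ref{gr-goingdown-surj}: it suffices to check that $h\textnormal{-Spec}(T_{(Q')}) \longrightarrow h\textnormal{-Spec}(R_{(Q' \cap R)})$ is surjective for every $Q' \in h\textnormal{-Spec}(T)$. When $Q' \cap R = 0$, the target is $h\textnormal{-Spec}(R_H) = \{0\}$ and there is nothing to verify. When $Q' \cap R = P$, the target is $\{0, PR_{(P)}\}$, and the zero ideal together with $Q'$ itself (which contracts to $P$) supply the required preimages. Thus the height-one hypothesis forces gGD, and $T$ is a gr-local gGD graded overring of $R$.

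Since $R$ is graded perinormal, $T$ must be a homogeneous localization of $R$, say $T = R_W$. Because $T$ is gr-local with graded maximal ideal contracting to $P$, every homogeneous element of $R \setminus P$ lies outside $Q'$ and is therefore a unit in $T$, giving $R_{(P)} \subseteq T$; conversely the elements of $W$ avoid $Q'$ and hence $P$, so $W \subseteq R \setminus P$ and $T = R_W \subseteq R_{(P)}$. A routine double inclusion then yields $T = R_{(P)}$, whence $R_{(P)}$ is a gr-valuation domain. As $P \in h\textnormal{-Spec}^1(R)$ was arbitrary, $R$ is a gr-$(R_1)$ domain.

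I expect the main obstacle to be the clean verification that $T$ satisfies gGD: precisely, confirming that the height-one hypothesis genuinely restricts $Q' \cap R$ to $\{0, P\}$ for every graded prime $Q'$ of $T$, so that the surjectivity criterion of Observation~\ref{gr-goingdown-surj} holds in each case, and then pinning down the gr-local localization $R_W$ as exactly $R_{(P)}$.
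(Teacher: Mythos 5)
Your argument is correct and is essentially the paper's own: the paper's proof of Proposition~\ref{gr-Prop3.2-peri} simply defers to the non-graded argument of Epstein--Shapiro's Proposition 3.2 with Lemma~\ref{lem-gr-v-max} supplying the gr-valuation overring centered at $P$, which is exactly the route you take (gGD holds trivially because the only graded primes under $P$ are $0$ and $P$, and perinormality then pins the gr-valuation overring down as $R_{(P)}$). Your write-up just makes explicit the "appropriate graded substitutions" the paper leaves implicit.
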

\begin{proof} Using Lemma~\ref{lem-gr-v-max} and appropriate graded substitutions, the proof is parallel to that of~\cite[Proposition 3.2]{EpSh-peri}. 
\end{proof}

\begin{proposition}\label{gr-Prop3.9-peri} Let $R$ be a $\mathbb{Z}$-graded Noetherian gr-$(R_1)$ domain and let $S$ be a graded gGD overring of $R$. Then $S$ satisfies gr-$(R_1)$, and the map $\textnormal{Spec}(S) \longrightarrow \textnormal{Spec}(R)$ induces an injective map $ h\textnormal{-Spec}^1(S) \longrightarrow  h\textnormal{-Spec}^1(R)$ whose image consists of $\mathfrak{p} \in  h\textnormal{-Spec}^1(R)$ such that $\mathfrak{p}S \neq S$.
\end{proposition}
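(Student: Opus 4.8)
The plan is to reduce all three assertions to a single localization identity: for any $Q \in h\textnormal{-Spec}(S)$ whose contraction $\mathfrak{p} = Q \cap R$ has graded height one, one has $S_{(Q)} = R_{(\mathfrak{p})}$. To set this up I would first record the transfer of height. Fix $Q \in h\textnormal{-Spec}^1(S)$ and put $\mathfrak{p} = Q \cap R$. Since $S$ is a graded overring, $Q \neq 0$ forces $\mathfrak{p} \neq 0$, for otherwise $R_H = R_{(0)} \subseteq S_{(Q)} \subseteq R_H$ would make $S_{(Q)}$ the graded quotient field and $Q = 0$. Applying gGD repeatedly to a descending graded chain beneath $\mathfrak{p}$ lifts it to a strictly descending graded chain beneath $Q$, so the graded height of $\mathfrak{p}$ is at most that of $Q$, namely one; hence $\mathfrak{p} \in h\textnormal{-Spec}^1(R)$ and the contraction map is well-defined on $h\textnormal{-Spec}^1$.

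Next comes the key computation. Because $\mathfrak{p} \in h\textnormal{-Spec}^1(R)$ and $R$ is gr-$(R_1)$, the ring $V \coloneqq R_{(\mathfrak{p})}$ is a gr-valuation domain, gr-local with graded maximal ideal $\mathfrak{m}$, and its only graded primes are $0$ and $\mathfrak{m}$. One checks $V \subseteq S_{(Q)} \subseteq R_H$, since any homogeneous $g \notin \mathfrak{p}$ stays outside $Q$ and so is invertible in $S_{(Q)}$; and one identifies $Q S_{(Q)} \cap V = \mathfrak{m}$, so every homogeneous element of $\mathfrak{m}$ is a nonunit of $S_{(Q)}$. If some homogeneous $t \in S_{(Q)}$ lay outside $V$, the gr-valuation property would give $t^{-1} \in \mathfrak{m}$, a nonunit of $S_{(Q)}$; yet $t \in S_{(Q)}$ makes $t^{-1}$ a unit of $S_{(Q)}$, a contradiction. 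As $S_{(Q)}$ is graded, this forces $S_{(Q)} \subseteq V$, whence $S_{(Q)} = R_{(\mathfrak{p})}$. In particular $S_{(Q)}$ is a gr-valuation domain, so $S$ is gr-$(R_1)$; and the graded primes of $S$ contained in $Q$, being those of $S_{(Q)} = V$, are exactly $0$ and $Q$, reconfirming that $Q$ has graded height one. This identity holds whenever $\mathfrak{p} = Q\cap R$ has graded height one, with no prior assumption on the height of $Q$, which is what keeps the image argument noncircular.

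I would then finish the remaining assertions. For injectivity, if $Q, Q' \in h\textnormal{-Spec}^1(S)$ satisfy $Q \cap R = Q' \cap R = \mathfrak{p}$, then $S_{(Q)} = R_{(\mathfrak{p})} = S_{(Q')}$ as gr-local graded overrings, so their graded maximal ideals coincide, and contracting to $S$ gives $Q = Q'$. For the image, the inclusion $\mathfrak{p}S \subseteq Q$ shows any $\mathfrak{p}$ in the image satisfies $\mathfrak{p}S \neq S$; conversely, if $\mathfrak{p} \in h\textnormal{-Spec}^1(R)$ with $\mathfrak{p}S \neq S$, choose a graded maximal ideal $M$ of $S$ containing $\mathfrak{p}S$, so that $\mathfrak{p} \subseteq M \cap R$ and $\mathfrak{p}R_{(M \cap R)} \in h\textnormal{-Spec}(R_{(M \cap R)})$. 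By Observation~\ref{gr-goingdown-surj} the map $h\textnormal{-Spec}(S_{(M)}) \to h\textnormal{-Spec}(R_{(M \cap R)})$ is surjective, producing a graded prime $Q \subseteq M$ of $S$ that contracts to $\mathfrak{p}$; the key identity then shows $Q$ has graded height one, so $\mathfrak{p}$ lies in the image.

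The main obstacle is the localization identity $S_{(Q)} = R_{(\mathfrak{p})}$, the graded incarnation of the statement that a proper overring of a one-dimensional valuation domain is its graded quotient field. The delicate points are the correct identification $QS_{(Q)} \cap R_{(\mathfrak{p})} = \mathfrak{m}$ and running the unit/nonunit dichotomy entirely on homogeneous elements through the gr-valuation property of $R_{(\mathfrak{p})}$; once this is in place, everything else is a formal consequence of gGD (via Observation~\ref{gr-goingdown-surj}) together with routine graded-localization bookkeeping, with Proposition~\ref{gr-valuation} available if one prefers to phrase the $(R_1)$ condition through the ordinary localizations $R_P$.
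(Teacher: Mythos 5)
Your argument is correct in substance and follows the same route the paper intends: the paper's proof simply defers to Epstein--Shapiro's Proposition 3.9 with graded replacements, and your write-up is exactly that transliteration --- gGD transfers height one from $Q$ down to $\mathfrak{p}=Q\cap R$, the gr-valuation property of $R_{(\mathfrak{p})}$ forces $S_{(Q)}=R_{(\mathfrak{p})}$ via the homogeneous unit/nonunit dichotomy, and injectivity together with the description of the image then follow from this identity and Observation~\ref{gr-goingdown-surj}. The one point you elide, and the only place the hypothesis ``$\mathbb{Z}$-graded Noetherian'' enters, is the passage between graded height and ordinary height: your chain-lifting and your key identity control only \emph{graded} height, while $h\textnormal{-Spec}^1$ and the gr-$(R_1)$ condition are stated in terms of height-one graded primes. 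Concretely, (i) to invoke gr-$(R_1)$ for $\mathfrak{p}=Q\cap R$ you need $\operatorname{ht}(\mathfrak{p})=1$, not merely graded height one, which requires the fact that graded height coincides with height for graded primes of a Noetherian $\mathbb{Z}$-graded domain (\cite[Theorem 1.5.8]{Bruns}); and (ii) in the image direction the lifted prime $Q$ must be shown to lie in $h\textnormal{-Spec}^1(S)$ in the sense of ordinary height, and since $S$ need not be Noetherian this does not come from the same citation --- but it does follow from your identity, because the primes of $S$ contained in $Q$ correspond to the primes of $S_{(Q)}=R_{(\mathfrak{p})}$ contained in $QS_{(Q)}=\mathfrak{p}R_{(\mathfrak{p})}$, whose height is $\operatorname{ht}_R(\mathfrak{p})=1$. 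With those two conversions made explicit, the proof is complete and matches the paper's argument.
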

\begin{proof} The proof is parallel to that of~\cite[Proposition 3.9]{EpSh-peri} using appropriate graded replacements and using the Proposition~\ref{gr-valuation} characterization of gr-$(R_1)$. Adding the $\mathbb{Z}$-graded Noetherian assumption allows one to use the fact that graded height coincides with height (see~\cite[Theorem 1.5.8]{Bruns}) which is used when utilizing gGD. 
\end{proof}

In the following graded versions of~\cite[Lemma 4.1]{EpSh-peri} and~\cite[Lemma 4.2]{EpSh-peri}, ``generalized Krull" is replaced with ``Krull" because ``generalized Krull" is not needed for the purposes here and graded Krull domains have been studied in detail.

\begin{lemma}\label{gr-lem4.1-peri} Let $R$ be a $G$-graded gr-$(R_1)$ domain whose integral closure $R^{\prime}$ is a graded Krull domain and such that for all $P \in  h\textnormal{-Spec}^1(R^{\prime})$, $P \cap R \in  h\textnormal{-Spec}^1(R)$. If there exists $\mathfrak{m} \in  h\textnormal{-Max}(R)$ such that $\mathfrak{p} \subseteq \mathfrak{m}$ for all $\mathfrak{p} \in  h\textnormal{-Spec}^1(R)$, then $R$ is gr-local.
\end{lemma}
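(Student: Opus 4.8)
The plan is to prove gr-locality by showing that every nonzero homogeneous element of $R$ lying outside $\mathfrak{m}$ is a unit. Once this is established, any proper homogeneous ideal consists of homogeneous non-units and is therefore contained in $\mathfrak{m}$; in particular every graded maximal ideal is contained in $\mathfrak{m}$ and hence equals $\mathfrak{m}$ by maximality, so $R$ is gr-local. Accordingly, I would fix a nonzero homogeneous $u \in R$ with $u \notin \mathfrak{m}$ and work to produce $u^{-1} \in R$.

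First I would transfer the condition $u \notin \mathfrak{m}$ down to height-one primes of $R$ and then up to $R^{\prime}$. By the hypothesis that $\mathfrak{p} \subseteq \mathfrak{m}$ for every $\mathfrak{p} \in h\textnormal{-Spec}^1(R)$, the assumption $u \notin \mathfrak{m}$ forces $u \notin P$ for all $P \in h\textnormal{-Spec}^1(R)$. Now take any $Q \in h\textnormal{-Spec}^1(R^{\prime})$. The standing hypothesis gives $Q \cap R \in h\textnormal{-Spec}^1(R)$, so $u \notin Q \cap R$ and hence $u \notin Q$. Thus $u$ avoids every height-one graded prime of $R^{\prime}$.

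Next I would invoke the graded Krull structure of $R^{\prime}$. Writing $R^{\prime} = \bigcap_{Q \in h\textnormal{-Spec}^1(R^{\prime})} R^{\prime}_{(Q)}$ with each $R^{\prime}_{(Q)}$ a rank-one graded valuation domain, the fact that $u \notin Q$ means precisely that $u$ has value zero in $R^{\prime}_{(Q)}$, i.e. $u$ is a homogeneous unit there. Consequently $u^{-1}$ lies in $R^{\prime}_{(Q)}$ for every such $Q$, and therefore $u^{-1} \in R^{\prime}$. So $u$ is a homogeneous unit of $R^{\prime}$.

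Finally I would descend from $R^{\prime}$ to $R$ using integrality. Since $u^{-1} \in R^{\prime}$ is integral over $R$, it satisfies a monic relation $(u^{-1})^n + r_{n-1}(u^{-1})^{n-1} + \cdots + r_0 = 0$ with $r_i \in R$; multiplying through by $u^{n-1}$ rewrites $u^{-1}$ as the $R$-polynomial $-(r_{n-1} + r_{n-2}u + \cdots + r_0 u^{n-1})$, so $u^{-1} \in R$ and $u$ is a unit in $R$. This is exactly the graded analog of the argument in \cite[Lemma 4.1]{EpSh-peri}. The step I expect to require the most care is the passage through $R^{\prime}$: one must be certain that the graded-Krull representation of $R^{\prime}$ as an intersection of rank-one graded valuation overrings genuinely detects homogeneous units via height-one graded primes, which is where the graded Krull theory (together with the gr-$(R_1)$ and integral-closure hypotheses framing the setup) carries the weight of the proof.
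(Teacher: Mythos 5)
Your proof is correct and is essentially the paper's own argument: the paper simply defers to the proof of \cite[Lemma 4.1]{EpSh-peri} with graded substitutions, using \cite[Theorem 5.12]{AnAn-divgr} to write $R^{\prime}$ as the intersection of its homogeneous localizations at height-one graded primes, which is exactly the route you take (pass $u \notin \mathfrak{m}$ to the height-one graded primes of $R$ and then of $R^{\prime}$, conclude $u^{-1} \in R^{\prime}$, and descend to $R$ by integrality). Your reduction of gr-locality to ``every homogeneous element outside $\mathfrak{m}$ is a unit'' and the monic-relation trick for $u^{-1}$ are both sound.
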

\begin{proof} The proof is parallel to that of~\cite[Lemma 4.1]{EpSh-peri} with appropriate graded replacements and using~\cite[Theorem 5.12]{AnAn-divgr} which gives that a graded Krull domain is equal to the intersection of its homogeneous localizations at height one graded prime ideals. 
\end{proof}

\begin{lemma}\label{gr-lem4.2-peri} Let $(R, \mathfrak{m})$ be a gr-local $\mathbb{Z}$-graded domain. Suppose that $R$ is a Noetherian gr-$(R_1)$ domain such that for all $P \in  h\textnormal{-Spec}^1(R^{\prime})$, $P \cap R \in  h\textnormal{-Spec}^1(R)$. Let $S$ be a graded integral overring of $R$ that satisfies gGD over $R$. Then $S$ is gr-local.

\begin{proof} The proof is parallel to that of~\cite[Lemma 4.2]{EpSh-peri} using appropriate graded substitutions and making use of the facts that the integral closure $R^{\prime}$ is a graded overring by~\cite[Lemma 1.6]{Cha-grintpruferlike} and graded height one primes of $R^{\prime}$ contract to graded height one primes of $R$.
\end{proof}
\end{lemma}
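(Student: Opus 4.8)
The plan is to verify that $S$ satisfies every hypothesis of Lemma~\ref{gr-lem4.1-peri} and then invoke that lemma to conclude $S$ is gr-local. First I would record the structural facts that make this transfer possible. Since $S$ is a graded integral overring of $R$, each element of $S$ is integral over $R$ and lies in the graded quotient field, so $R \subseteq S \subseteq R'$; because $R'$ is integrally closed and integral over $S$, it is simultaneously the integral closure of $S$, and by~\cite[Lemma 1.6]{Cha-grintpruferlike} it is a graded overring, so the graded-prime machinery applies to it. As $R$ is $\mathbb{Z}$-graded Noetherian, its integral closure $R'$ is a graded Krull domain (the graded form of Mori--Nagata), and $\mathbb{Z}$-graded Noetherianness lets me identify graded height with height throughout, exactly as in Proposition~\ref{gr-Prop3.9-peri}. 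Finally, Proposition~\ref{gr-Prop3.9-peri} applies to the gGD overring $S$ and yields both that $S$ is a gr-$(R_1)$ domain and that contraction induces an \emph{injective} map $h\textnormal{-Spec}^1(S) \to h\textnormal{-Spec}^1(R)$.

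Next I would dispatch the contraction condition of Lemma~\ref{gr-lem4.1-peri} for the integral closure. Let $P \in h\textnormal{-Spec}^1(R')$ and set $\mathfrak{p} = P \cap R$ and $Q = P \cap S$. By hypothesis $\mathfrak{p} \in h\textnormal{-Spec}^1(R)$, and $Q$ is a nonzero graded prime of $S$ lying over $\mathfrak{p}$; graded incomparability for the integral extension $R \subseteq S$ (see~\cite{HaSa-grprINCext}) then forces $Q$ to have graded height one, being nonzero and contracting to the graded height-one prime $\mathfrak{p}$. Thus $Q \in h\textnormal{-Spec}^1(S)$, as required.

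The heart of the argument, and the step I expect to be the main obstacle, is producing a \emph{single} graded maximal ideal of $S$ that contains every graded height-one prime of $S$. I would fix any $\mathfrak{n} \in h\textnormal{-Max}(S)$; since $S$ is integral over the gr-local domain $R$, graded lying-over forces $\mathfrak{n} \cap R = \mathfrak{m}$. Given $\mathfrak{q} \in h\textnormal{-Spec}^1(S)$, its contraction $\mathfrak{p} = \mathfrak{q} \cap R$ lies in $h\textnormal{-Spec}^1(R)$ by Proposition~\ref{gr-Prop3.9-peri}, and $\mathfrak{p} \subseteq \mathfrak{m}$ because $R$ is gr-local. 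Applying gGD to $\mathfrak{p} \subseteq \mathfrak{m}$ in $h\textnormal{-Spec}(R)$ and to $\mathfrak{n}$ (which contracts to $\mathfrak{m}$) produces a graded prime $\tilde{\mathfrak{p}} \subseteq \mathfrak{n}$ of $S$ with $\tilde{\mathfrak{p}} \cap R = \mathfrak{p}$; graded incomparability again forces $\tilde{\mathfrak{p}}$ to have graded height one, and the injectivity of the contraction map on $h\textnormal{-Spec}^1(S)$ then forces $\tilde{\mathfrak{p}} = \mathfrak{q}$. Hence $\mathfrak{q} = \tilde{\mathfrak{p}} \subseteq \mathfrak{n}$, so every graded height-one prime of $S$ lies in the common graded maximal ideal $\mathfrak{n}$.

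With all hypotheses of Lemma~\ref{gr-lem4.1-peri} verified for $S$ (taking $\mathfrak{n}$ as the distinguished graded maximal ideal), that lemma yields that $S$ is gr-local, completing the proof. The subtle points to watch are the correct invocations of graded incomparability and lying-over for integral extensions and the identification of graded height with height, both of which are precisely where the $\mathbb{Z}$-graded Noetherian hypothesis is used; once these are in hand, the argument is a clean transfer of the gr-locality criterion of Lemma~\ref{gr-lem4.1-peri} from $R$ to $S$.
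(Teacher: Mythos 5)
Your proposal is correct and follows essentially the same route as the paper: the paper's proof is the graded transfer of \cite[Lemma 4.2]{EpSh-peri}, which is exactly what you carry out — verify the hypotheses of Lemma~\ref{gr-lem4.1-peri} for $S$ (using $S' = R'$, the graded Mori--Nagata theorem, Proposition~\ref{gr-Prop3.9-peri} for gr-$(R_1)$ and injectivity on $h\textnormal{-Spec}^1$, and incomparability for the integral extension), then use gGD to place every graded height-one prime of $S$ inside a fixed graded maximal ideal. No gaps; this is a faithful fleshing-out of the argument the paper leaves implicit.
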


\subsubsection{Noetherian universally catenary domains}

\begin{theorem}\label{gr-thm4.7-peri} Let $R = \bigoplus_{\alpha\in G}R_{\alpha}$ be a $\mathbb{Z}$-graded Noetherian  universally catenary domain.
\begin{itemize}
\item[$(a)$] $R$ is a graded perinormal domain.
\item[$(b)$] $R$ is gr-$(R_1)$ and for each $\mathfrak{p} \in  h\textnormal{-Spec}(R)$, $R_{(\mathfrak{p})}$ is the only graded ring $S$ between $R_{(\mathfrak{p})}$ and $R_H$ such that the induced map $h\textnormal{-Spec}(S) \longrightarrow h\textnormal{-Spec}(R_{(\mathfrak{p})})$ is an order reflecting bijection.
\item[$(c)$] $R$ is gr-$(R_1)$ and for each $\mathfrak{p} \in  h\textnormal{-Spec}(R)$, $R_{(\mathfrak{p})}$ is the only graded ring $S$ between $R_{(\mathfrak{p})}$ and its integral closure such that the induced map $h\textnormal{-Spec}(S) \longrightarrow h\textnormal{-Spec}(R_{(\mathfrak{p})})$ is an order-reflecting bijection.
\end{itemize}

\begin{proof} Let $R = \bigoplus_{\alpha\in G}R_{\alpha}$ be a $G$-graded domain.

\bigskip
((a) $\implies$ (b)). Let $R = \bigoplus_{\alpha\in G}R_{\alpha}$ be a $G$-graded domain. Because graded perinormality is a gr-local property, suppose that $R$ is a gr-local graded perinormal domain. Then $R$ is gr-$(R_1)$ by Proposition~\ref{gr-Prop3.2-peri}. Let $S$ be a graded overring of $R$ such that $h\textnormal{-Spec}(S) \longrightarrow h\textnormal{-Spec}(R)$ is an order-reflecting bijection. Then $S$ satisfies gGD over $R$ and is a gr-local domain. Because $R$ is a graded perinormal domain and the graded maximal ideal of $S$ must contract to the graded maximal ideal of $R$, $S = R$ by Proposition~\ref{gradedperi-prop3.3}.

\bigskip

((b) $\implies$ (c)). This follows directly because the integral closure is contained within the graded quotient field by~\cite[Proposition 2.1]{AnAn-divgr}.

\bigskip

((c) $\implies$ (a)). Let $(S, \mathfrak{n})$ be a gr-local graded gGD overring of $R$. Let $\mathfrak{p} \coloneqq \mathfrak{n} \cap R$ (so $\mathfrak{p} \in  h\textnormal{-Spec}(R)$). Note that $R_{(\mathfrak{p})}$ satisfies gr-$(R_1)$ so by Proposition~\ref{gr-Prop3.9-peri}, there is a bijection $ h\textnormal{-Spec}^1(S) \longrightarrow  h\textnormal{-Spec}^1(R_{(\mathfrak{p})})$ where by~\cite[Lemma 3.7]{EpSh-peri} corresponding localizations are equal and hence corresponding graded localizations are equal. Note that $R_{(\mathfrak{p})}$ is universally catenary because $R$ is. By integrality,~\cite[Lemma 3.7]{EpSh-peri} and~\cite[Lemma 4.3]{EpSh-peri}, there is a bijection $ h\textnormal{-Spec}^1(R_{(\mathfrak{p})}^{\prime}) \longrightarrow  h\textnormal{-Spec}^1(R_{(\mathfrak{p})})$ where corresponding localizations are equal and hence corresponding graded localizations are equal. Note that $(R_{(\mathfrak{p})})^{\prime}$ is a Krull domain by the Mori-Nagata Integral Closure Theorem and is graded because the integral closure is a graded overring by~\cite[Lemma 1.6]{Cha-grintpruferlike}. Hence
$$R_{(\mathfrak{p})} \subseteq S \subseteq \bigcap_{Q \in  h\textnormal{-Spec}^1(S)}S_{(Q)} = \bigcap_{P \in  h\textnormal{-Spec}^1(R_{(\mathfrak{p})})} (R_{(\mathfrak{p})})_{(P)} = (R_{(\mathfrak{p})})^{\prime}.$$
So $S$ is integral over $R_{(\mathfrak{p})}$.

\bigskip

Let $P \in h\textnormal{-Spec}(R_{(\mathfrak{p})})$. Let
$$W = \{x \in R_{(\mathfrak{p})}\backslash P \, |\,x \textnormal{ is homogeneous}\}.$$
Then $(R_{(\mathfrak{p})})_{(P)} \subseteq S_W$ is an integral extension and satisfies going-down (because $R_{(\mathfrak{p})} \subseteq S$ does). Note that $PS_W \neq S_W$ by integrality. As previously noted, the integral closure of $R_{(\mathfrak{p})}$ is a Krull domain. So $S_W$ is gr-local by~\cite[Lemma 4.3]{EpSh-peri} and Lemma~\ref{gr-lem4.2-peri}. Then only one graded prime ideal of $S$ lies over $Q$. So $h\textnormal{-Spec}(S) \longrightarrow  h\textnormal{-Spec}(R_{(\mathfrak{p})})$ is injective.
\bigskip

Because $S$ is integral over $R_{(\mathfrak{p})}$, $h\textnormal{-Spec}(S) \longrightarrow h\textnormal{-Spec}(R_{(\mathfrak{p})})$ is surjective and hence is a bijection. By the Going-Up Theorem, the bijection is order-reflecting. Hence $S = R_{(\mathfrak{p})}$. Thus $R$ is a graded perinormal domain.
\end{proof}
\end{theorem}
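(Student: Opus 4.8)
The plan is to establish the equivalence of $(a)$, $(b)$, and $(c)$ by proving the cyclic chain $(a) \implies (b) \implies (c) \implies (a)$, following the template of the non-graded result~\cite[Theorem 4.7]{EpSh-peri} but substituting the graded tools developed earlier. Throughout I would exploit that graded perinormality is a gr-local property (Proposition~\ref{gr-prop-2.5-1}), which lets me reduce statement $(a)$ to the gr-local case whenever convenient and lets me pass freely to the homogeneous localizations $R_{(\mathfrak{p})}$, each of which inherits being Noetherian, universally catenary, and (once known) gr-$(R_1)$.

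For $(a) \implies (b)$, assuming $R$ is graded perinormal, the gr-$(R_1)$ conclusion is immediate from Proposition~\ref{gr-Prop3.2-peri}. For the uniqueness claim I fix $\mathfrak{p} \in h\textnormal{-Spec}(R)$, so that $R_{(\mathfrak{p})}$ is a gr-local graded perinormal domain, and take any graded $S$ between $R_{(\mathfrak{p})}$ and $R_H$ whose induced map on $h\textnormal{-Spec}$ is an order-reflecting bijection. Such a bijection forces $S$ to be gr-local with its graded maximal ideal contracting to that of $R_{(\mathfrak{p})}$, and it makes $S$ a gGD overring; Proposition~\ref{gradedperi-prop3.3} then yields $S = R_{(\mathfrak{p})}$. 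The implication $(b) \implies (c)$ I expect to be essentially formal: since the graded integral closure of $R_{(\mathfrak{p})}$ sits inside the graded quotient field $R_H$ by~\cite[Proposition 2.1]{AnAn-divgr}, any $S$ meeting the hypotheses of $(c)$ is a fortiori one of the rings constrained by $(b)$, so the uniqueness asserted in $(b)$ immediately forces that of $(c)$.

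The crux of the argument is $(c) \implies (a)$. Here I start from a gr-local gGD graded overring $(S, \mathfrak{n})$, set $\mathfrak{p} := \mathfrak{n} \cap R \in h\textnormal{-Spec}(R)$, and aim to show $S = R_{(\mathfrak{p})}$. The key first step is to prove that $S$ is integral over $R_{(\mathfrak{p})}$. Using that $R_{(\mathfrak{p})}$ is gr-$(R_1)$ and universally catenary, Proposition~\ref{gr-Prop3.9-peri} supplies a bijection $h\textnormal{-Spec}^1(S) \longrightarrow h\textnormal{-Spec}^1(R_{(\mathfrak{p})})$ under which corresponding graded localizations coincide; combining this with the Krull description of the integral closure---the integral closure $(R_{(\mathfrak{p})})^{\prime}$ is Krull by Mori--Nagata, graded by~\cite[Lemma 1.6]{Cha-grintpruferlike}, and equals the intersection of its height-one graded localizations by~\cite[Theorem 5.12]{AnAn-divgr}---sandwiches $S$ between $R_{(\mathfrak{p})}$ and $(R_{(\mathfrak{p})})^{\prime}$, so $S$ is integral over $R_{(\mathfrak{p})}$.

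I expect the main obstacle to be verifying that the induced map $h\textnormal{-Spec}(S) \longrightarrow h\textnormal{-Spec}(R_{(\mathfrak{p})})$ is an order-reflecting bijection. Surjectivity follows from integrality by graded lying-over, and order-reflection from graded going-up, but injectivity is the delicate point: for each $P \in h\textnormal{-Spec}(R_{(\mathfrak{p})})$ I would localize $S$ at the homogeneous complement $W$ of $P$, observe that $(R_{(\mathfrak{p})})_{(P)} \subseteq S_W$ is integral and still satisfies going-down, and then invoke Lemma~\ref{gr-lem4.2-peri} (with the Krull integral closure) to conclude that $S_W$ is gr-local, so exactly one graded prime of $S$ lies over $P$. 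Once the order-reflecting bijection is in hand, $S$ is a graded ring between $R_{(\mathfrak{p})}$ and $(R_{(\mathfrak{p})})^{\prime}$ of precisely the type constrained by $(c)$, whence $S = R_{(\mathfrak{p})}$ is a localization of $R$, completing the cycle and proving $R$ graded perinormal.
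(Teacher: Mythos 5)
Your proposal is correct and follows essentially the same route as the paper: the same cyclic chain $(a)\Rightarrow(b)\Rightarrow(c)\Rightarrow(a)$, the same use of Proposition~\ref{gradedperi-prop3.3} for $(a)\Rightarrow(b)$, and the same two-stage argument for $(c)\Rightarrow(a)$ (first sandwiching $S$ inside the Krull integral closure via Proposition~\ref{gr-Prop3.9-peri} and Mori--Nagata to get integrality, then proving injectivity of $h\textnormal{-Spec}(S)\to h\textnormal{-Spec}(R_{(\mathfrak{p})})$ by localizing at the homogeneous complement and invoking Lemma~\ref{gr-lem4.2-peri}). The only cosmetic difference is that you cite~\cite[Theorem 5.12]{AnAn-divgr} explicitly for the intersection description of the Krull integral closure, which the paper uses implicitly in its displayed chain of equalities.
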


\subsection{Graded domains that are graded perinormal domains}

In~\cite{MoZa-onpvmd}, Mott and Zafrullah introduced a class of integral domains that they called $P$-domains. In this section a notion of a graded $P$-domain is given and it is shown that every graded $P$-domain is a graded perinormal domain.

\begin{definition} A domain $R$ is a \emph{graded essential domain} (or simply a gr-essential domain) if 
$$R = \bigcap_{P \in I} R_{(P)}$$ 
where $I$ is a set of graded prime ideals of $R$ such that $R_{(P)}$ is a gr-valuation domain for all $P \in I$.
\end{definition}

\begin{definition} A domain $R$ is a \emph{graded $P$-domain} if it is a gr-essential domain such that every homogeneous localization is a gr-essential domain.
\end{definition}

Graded P\"{u}fer domains are graded $P$-domains by~\cite[Theorem 3.1 and Theorem 3.5]{ACZ-grpruf} and graded Krull domains are graded $P$-domains by~\cite[Theorem 5.12]{AnAn-divgr}.

\begin{proposition}\label{DR18-gr-2.1} Let $R$ be a gr-essential domain. If $S$ is a graded overring such that for each $P \in  h\textnormal{-Spec}(R)$ there exists $Q \in  h\textnormal{-Spec}(S)$ such that $Q \cap R = P$, then $S = R$.
\end{proposition}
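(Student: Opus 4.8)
The plan is to prove the reverse inclusion $S \subseteq R$; since $S$ is a graded overring of $R$ we already have $R \subseteq S \subseteq R_H$. Because $R$ is gr-essential, $R = \bigcap_{P \in I} R_{(P)}$ with each $R_{(P)}$ a gr-valuation domain, so it suffices to show that $S \subseteq R_{(P)}$ for every $P \in I$, and then intersect.

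Fix $P \in I \subseteq h\textnormal{-Spec}(R)$. By hypothesis there is $Q \in h\textnormal{-Spec}(S)$ with $Q \cap R = P$. The homogeneous elements of $R$ outside $P$ are exactly the homogeneous elements of $R$ outside $Q$, so localizing the inclusion $R \subseteq S$ at these multiplicative sets yields $R_{(P)} \subseteq S_{(Q)}$. Moreover $P \subseteq Q$ forces the graded maximal ideal $PR_{(P)}$ of $R_{(P)}$ to lie inside the graded maximal ideal $QS_{(Q)}$ of $S_{(Q)}$: each generator $p/s$ with $p \in P$ and $s$ homogeneous outside $P$ is the product of $p \in Q$ with the unit $s^{-1}$ of $S_{(Q)}$. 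I would also record the trivial inclusion $S \subseteq S_{(Q)}$ for later use.

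The main step is to show $S_{(Q)} \subseteq R_{(P)}$, using that $R_{(P)}$ is a gr-valuation domain. Let $x$ be a nonzero homogeneous element of $S_{(Q)} \subseteq R_H$. By the defining dichotomy of a gr-valuation domain, either $x \in R_{(P)}$ or $x^{-1} \in R_{(P)}$. In the latter case, if $x \notin R_{(P)}$ then $x^{-1}$ cannot be a unit of the gr-local domain $R_{(P)}$, so $x^{-1} \in PR_{(P)} \subseteq QS_{(Q)}$; but $x^{-1} \in S_{(Q)}$ has inverse $x \in S_{(Q)}$, making $x^{-1}$ a unit of $S_{(Q)}$ and contradicting $x^{-1} \in QS_{(Q)}$. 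Hence $x \in R_{(P)}$ in every case, and since both rings are graded this gives $S_{(Q)} \subseteq R_{(P)}$, whence $S_{(Q)} = R_{(P)}$. Therefore $S \subseteq S_{(Q)} = R_{(P)}$. As $P \in I$ was arbitrary, $S \subseteq \bigcap_{P \in I} R_{(P)} = R$, and combined with $R \subseteq S$ we conclude $S = R$.

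The only delicate point is the gr-valuation dichotomy argument in the third paragraph. One must be careful that $x$ is taken homogeneous, both so that the dichotomy applies and so that verifying membership on homogeneous elements suffices to conclude the ring inclusion $S_{(Q)} \subseteq R_{(P)}$; and one must secure the containment $PR_{(P)} \subseteq QS_{(Q)}$ of graded maximal ideals, which is precisely where the hypothesis $Q \cap R = P$ enters. Everything else is formal manipulation of homogeneous localizations.
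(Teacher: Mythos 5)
Your proof is correct, and it follows the same skeleton as the paper's (which simply defers to the proof of Dumitrescu--Rani's Theorem 2.1 with graded substitutions): fix $P$ in the defining family $I$, use the lying-over hypothesis to produce $Q$, pass to $R_{(P)} \subseteq S_{(Q)}$, show these are equal, and intersect. The one genuine difference is in how you establish $S_{(Q)} = R_{(P)}$: the paper invokes the graded analogue of Gilmer's Theorem 26.1 (that every graded overring of a gr-valuation domain is a homogeneous localization at a graded prime, which one then identifies via the center), whereas you argue directly from the gr-valuation dichotomy that a homogeneous $x \in S_{(Q)} \setminus R_{(P)}$ would force $x^{-1} \in PR_{(P)} \subseteq QS_{(Q)}$ while being a unit of $S_{(Q)}$, a contradiction. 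Your route is more self-contained (it avoids the external structure theorem for overrings of gr-valuation domains), and you correctly flag the two points that need care: restricting to homogeneous elements so the dichotomy applies and so that membership on homogeneous elements suffices, and using $Q \cap R = P$ both to get $R_{(P)} \subseteq S_{(Q)}$ and to get $PR_{(P)} \subseteq QS_{(Q)}$. The paper's citation-based route buys brevity and reuses an established graded result; yours buys transparency at the cost of a few extra lines.
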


\begin{proof} The proof is parallel to that of~\cite[Theorem 2.1]{DuRa-noteperi} using the appropriate graded substitutions and using~\cite[Theorem 2.3(4)]{AAC-grval} in place of~\cite[Theorem 26.1]{Gilmer}.
\end{proof}

\begin{proposition}\label{gr-thm2.2-noteperi} A graded $P$-domain is a graded perinormal domain.
\end{proposition}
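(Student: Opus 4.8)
The plan is to verify the defining condition directly: I take an arbitrary gr-local gGD graded overring $(S,\mathfrak{n})$ of $R$ and show that it coincides with a homogeneous localization of $R$, the natural candidate being $R_{(\mathfrak{p})}$, where $\mathfrak{p} \coloneqq \mathfrak{n}\cap R \in h\textnormal{-Spec}(R)$. First I would record the containment $R_{(\mathfrak{p})}\subseteq S$. Since $R\subseteq S$ and $S$ is gr-local with graded maximal ideal $\mathfrak{n}$, any homogeneous $h\in R\setminus\mathfrak{p}$ satisfies $h\notin\mathfrak{n}$ (otherwise $h\in\mathfrak{n}\cap R=\mathfrak{p}$), hence $h$ is a homogeneous unit of the gr-local domain $S$; inverting all such $h$ keeps us inside $S$, giving $R_{(\mathfrak{p})}\subseteq S\subseteq R_H$, so that $S$ is a graded overring of $R_{(\mathfrak{p})}$.

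Next I invoke the two structural inputs. On the base side, the graded $P$-domain hypothesis gives precisely what is needed: by definition every homogeneous localization of $R$ is gr-essential, so $R_{(\mathfrak{p})}$ is a gr-essential domain. On the spectral side, the key step is to produce the surjectivity hypothesis demanded by Proposition~\ref{DR18-gr-2.1}, namely that for each $P\in h\textnormal{-Spec}(R_{(\mathfrak{p})})$ there exists $Q\in h\textnormal{-Spec}(S)$ with $Q\cap R_{(\mathfrak{p})}=P$. I would read this off from gGD via Observation~\ref{gr-goingdown-surj} applied to the graded prime $\mathfrak{n}\in h\textnormal{-Spec}(S)$: that observation yields surjectivity of $h\textnormal{-Spec}(S_{(\mathfrak{n})})\to h\textnormal{-Spec}(R_{(\mathfrak{n}\cap R)})$, and since $S$ is gr-local we have $S_{(\mathfrak{n})}=S$ while $\mathfrak{n}\cap R=\mathfrak{p}$, so the contraction map $h\textnormal{-Spec}(S)\to h\textnormal{-Spec}(R_{(\mathfrak{p})})$ is surjective, which is exactly the required condition.

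With $R_{(\mathfrak{p})}$ gr-essential and the surjectivity of $h\textnormal{-Spec}(S)\to h\textnormal{-Spec}(R_{(\mathfrak{p})})$ established, Proposition~\ref{DR18-gr-2.1} forces $S=R_{(\mathfrak{p})}$, a homogeneous localization of $R$. As $(S,\mathfrak{n})$ was an arbitrary gr-local gGD graded overring, this shows $R$ is a graded perinormal domain. I expect the only delicate point to be the bookkeeping in the surjectivity step, namely confirming that reading $Q=\mathfrak{n}$ into Observation~\ref{gr-goingdown-surj} produces the contraction map to $R_{(\mathfrak{p})}$ itself (using $S_{(\mathfrak{n})}=S$ and $\mathfrak{n}\cap R=\mathfrak{p}$); once that identification is made, the conclusion is a direct application of the cited results.
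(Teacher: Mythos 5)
Your argument is correct, and it takes a more direct route than the paper does. The paper's proof mirrors that of \cite[Theorem 2.2]{DuRa-noteperi}: it first reduces graded perinormality to the statement that every gGD graded overring is flat (Proposition~\ref{gradedperiprop2.4}), and then verifies flatness via the graded Richman criterion (Proposition~\ref{gradedRichThm2}) by showing $S_{(\mathfrak{n})} = R_{(\mathfrak{n} \cap R)}$ for each $\mathfrak{n} \in h\textnormal{-Max}(S)$, which is where Observation~\ref{gr-goingdown-surj} and Proposition~\ref{DR18-gr-2.1} enter. You instead verify the definition of graded perinormality directly: for a gr-local gGD graded overring $(S,\mathfrak{n})$ you identify $S$ with $R_{(\mathfrak{p})}$, $\mathfrak{p} = \mathfrak{n}\cap R$, using exactly the same two ingredients --- surjectivity of $h\textnormal{-Spec}(S) \longrightarrow h\textnormal{-Spec}(R_{(\mathfrak{p})})$ read off from Observation~\ref{gr-goingdown-surj} at $Q = \mathfrak{n}$, and gr-essentiality of the homogeneous localization $R_{(\mathfrak{p})}$ (supplied by the graded $P$-domain hypothesis) fed into Proposition~\ref{DR18-gr-2.1}. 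The core step is thus identical; what your version saves is the detour through the flatness characterization and the graded Richman theorem, at the price of justifying separately that $R_{(\mathfrak{p})} \subseteq S$ and $S_{(\mathfrak{n})} = S$. Both of these you handle correctly: a homogeneous nonunit of a graded domain lies in a graded maximal ideal, so in the gr-local domain $S$ every homogeneous element outside $\mathfrak{n}$ is a unit, which gives both containments at once. Nothing is lost by the shortcut, since once graded perinormality is established, flatness of arbitrary gGD graded overrings is recovered from the equivalence in Proposition~\ref{gradedperiprop2.4}.
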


\begin{proof} The proof is parallel to that of~\cite[Theorem 2.2]{DuRa-noteperi} making the appropriate graded substitutions, using the Proposition~\ref{gradedperiprop2.4} characterization of graded perinormality, Observation~\ref{gr-goingdown-surj} instead of~\cite[Lemma 2.2]{EpSh-peri}, Proposition~\ref{DR18-gr-2.1} instead of~\cite[Theorem 2.1]{DuRa-noteperi}, and Proposition~\ref{gradedRichThm2} instead of~\cite[Theorem 2]{Ric-genqr}. 
\end{proof}

\scalebox{0.8}{
$$\xymatrix{ \textnormal{graded Krull domain} \ar@{=>}[dr] & & \textnormal{graded P\"{u}fer domain} \ar@{=>}[dl]\\
& \textnormal{graded $P$-domain}\ar@{=>}[d] & \\
& \textnormal{graded perinormal domain} \ar@{=>}[dr]\ar@{=>}[dl]& \\
\textnormal{weakly normal} & & \textnormal{gr-$(R_1)$}}$$}

\subsection{Transfer of properties between $R$ and $R_0$}

We will use the following fact to prove the first result of this section.

\begin{proposition}[\cite{BrunsGubeladze}, Proposition 4.10(b)]\label{gr-quotientring->homogeneousms} Let $\Gamma$ be an arbitrary torsion free grading monoid $($i.e., torsion free commutative cancellative monoid$)$ and $R$ be a $\Gamma$-graded domain. Then every unit of $R$ is homogeneous.
\end{proposition}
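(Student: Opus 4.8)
The plan is to pass to a total order on the grading. Since $\Gamma$ is a torsion-free commutative cancellative monoid, it embeds in its group of differences $\langle \Gamma \rangle = \{a - b \mid a,b \in \Gamma\}$, which is a torsion-free abelian group. By the classical fact that every torsion-free abelian group admits a total order compatible with its group operation, I would fix such an order $\le$ on $\langle \Gamma \rangle$. Extending the grading by setting $R_\gamma = \{0\}$ for $\gamma \in \langle\Gamma\rangle \setminus \Gamma$ (exactly as done earlier in the excerpt), I may regard $R$ as $\langle\Gamma\rangle$-graded, so that every homogeneous element now carries a degree in a totally ordered abelian group and $1$ is homogeneous of degree $0$.

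Next, let $u \in R$ be a unit with inverse $v$, and write their homogeneous decompositions $u = u_{\alpha_1} + \cdots + u_{\alpha_m}$ and $v = v_{\beta_1} + \cdots + v_{\beta_n}$, where $\alpha_1 < \cdots < \alpha_m$ and $\beta_1 < \cdots < \beta_n$ list the degrees of the nonzero components. The crux is that because $R$ is a domain, the top homogeneous component of the product $uv$ is $u_{\alpha_m}v_{\beta_n}$, of degree $\alpha_m + \beta_n$, and it is nonzero: it is the unique product of components landing in the maximal degree, so no cancellation can occur. Symmetrically, the bottom component $u_{\alpha_1}v_{\beta_1}$, of degree $\alpha_1 + \beta_1$, is nonzero.

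Finally, since $uv = 1$ is homogeneous of degree $0$, its only nonzero homogeneous component sits in degree $0$, forcing $\alpha_m + \beta_n = 0 = \alpha_1 + \beta_1$. As $\alpha_1 \le \alpha_m$ and $\beta_1 \le \beta_n$ in the ordered group $\langle\Gamma\rangle$, the equality $\alpha_1 + \beta_1 = \alpha_m + \beta_n$ gives $(\alpha_m - \alpha_1) + (\beta_n - \beta_1) = 0$ with both summands $\ge 0$, whence $\alpha_1 = \alpha_m$ and $\beta_1 = \beta_n$. Thus $m = n = 1$ and $u = u_{\alpha_1}$ is homogeneous, as desired.

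The only real obstacle is justifying the existence of the compatible total order, and this is precisely where the torsion-free hypothesis on $\Gamma$ is used; it is the reason that hypothesis appears in the statement. Once the order is in place, the leading/trailing-term comparison is the standard degree argument in a domain and presents no difficulty.
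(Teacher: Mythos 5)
Your proof is correct. The paper itself gives no argument for this statement---it is quoted directly from Bruns--Gubeladze with a citation---and your argument (pass to the torsion-free group of differences $\langle\Gamma\rangle$, equip it with a compatible total order, and compare the top and bottom homogeneous components of $uv=1$ using that $R$ is a domain) is exactly the standard proof of that cited result. The one step you flag as the ``only real obstacle,'' the existence of a compatible total order on a torsion-free abelian group, is indeed a classical theorem (Levi), so nothing is missing.
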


\begin{proposition}\label{FQR-descent} Let $\Gamma$ be an arbitrary torsion free grading monoid $($i.e., torsion free commutative cancellative monoid$)$ such that $\Gamma \cap -\Gamma = \{0\}$ $($i.e., trivial group of units$)$. Let $R$ be a $\Gamma$-graded domain. Suppose that every flat graded overring of $R$ is a localization of $R$. Then every flat overring of $R_0$ is a localization of $R_0$.

\begin{proof} Suppose that every flat graded overring of $R$ is a localization of $R$. Let $A$ be a flat overring of $R_0$. So $AR$ is a flat graded overring of $R$. Thus $AR = R_W$ for some multiplicative subset $W$ of $R$. By Proposition~\ref{gr-quotientring->homogeneousms}, $W$ is a homogeneous multiplicative subset. Because $AR$ is $\Gamma$-graded and $\Gamma \cap -\Gamma = \{0\}$, any $w \in W$ must have $\deg(w) = 0$. So $W \subseteq R_0$. Note that clearly $(R_0)_W \subseteq S$ because $S = (AR)_0$. Let $s \in S$. Because $AR = R_W$, $s = \tfrac{r}{w}$ where $r \in R$ and $w \in W$. However, $W \subseteq R_0$ and $s = \tfrac{r}{w}$ is homogeneous and hence $\tfrac{r}{w} \in (R_0)_W$. Thus $A = (R_0)_W$.
\end{proof}
\end{proposition}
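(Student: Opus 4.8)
The plan is to push a flat overring $A$ of $R_0$ up to a flat graded overring of $R$ via the construction $AR=\bigoplus_{\alpha}AR_{\alpha}$, apply the hypothesis there, and then descend back to degree zero. First I would record the basic placement of $A$: since $A$ is an overring of $R_0$ we have $R_0\subseteq A\subseteq\mathrm{Frac}(R_0)\subseteq (R_H)_0$, so every element of $A$ is homogeneous of degree $0$. Hence $AR_{\alpha}\subseteq (R_H)_{\alpha}$ for every $\alpha$, and because $R_{\alpha}=0$ whenever $\alpha\notin\Gamma$, the graded overring $AR$ is concentrated in degrees lying in $\Gamma$; in particular $AR$ is a $\Gamma$-graded domain. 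Since $A$ is flat over $R_0$, the discussion preceding the statement gives $AR\cong A\otimes_{R_0}R$ and, more importantly, that $AR$ is flat over $R$. Thus $AR$ is a flat graded overring of $R$, and the hypothesis yields $AR=R_W$ for some multiplicative subset $W\subseteq R$.

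The two remaining tasks are to show that $W$ may be taken inside $R_0$ and then to extract $A$ as a degree-zero component. Every $w\in W$ maps to a unit under the localization map $R\to R_W=AR$, and this map is exactly the graded inclusion $R\hookrightarrow AR$ (both are $r\mapsto r$ inside $R_H$). By Proposition~\ref{gr-quotientring->homogeneousms} every unit of the $\Gamma$-graded domain $AR$ is homogeneous; since $R\hookrightarrow AR$ is injective and degree-preserving, a homogeneous image forces $w$ itself to be homogeneous in $R$, because if $w$ had two nonzero homogeneous components they would remain nonzero in distinct degrees of $AR$. So $W$ consists of homogeneous elements. Now the hypothesis $\Gamma\cap-\Gamma=\{0\}$ enters: if $w\in W$ has degree $\alpha\in\Gamma$, then $w^{-1}\in AR$ is homogeneous, nonzero, of degree $-\alpha$, so $(AR)_{-\alpha}\neq 0$, which forces $-\alpha\in\Gamma$. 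Therefore $\alpha\in\Gamma\cap-\Gamma=\{0\}$, i.e. $\deg(w)=0$, and $W\subseteq R_0$.

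Finally I would compare degree-zero components of $AR=R_W$. Because $R_0\subseteq A$ and $A$ is a ring, $AR_0=A$, so $(AR)_0=A$. On the other hand, since $W\subseteq R_0$, an element $r/w\in R_W$ is homogeneous of degree $0$ exactly when $r\in R_0$, whence $(R_W)_0=(R_0)_W$. Taking degree-zero parts then gives $A=(AR)_0=(R_W)_0=(R_0)_W$, a localization of $R_0$, as desired. The hard part is the middle step: one must argue that the multiplicative set presenting $AR$ as a localization can be chosen not merely homogeneous but of degree zero. This is precisely where both the fact that units are homogeneous and the trivial-units hypothesis $\Gamma\cap-\Gamma=\{0\}$ are indispensable, for without the latter a homogeneous $w$ of nonzero degree could become invertible in $AR$ and $W$ would escape $R_0$, breaking the final degree-zero comparison.
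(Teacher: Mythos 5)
Your proposal is correct and follows essentially the same route as the paper: form $AR$, use the earlier remark that flatness of $A$ over $R_0$ makes $AR$ a flat graded overring, write $AR=R_W$, use the homogeneity of units plus $\Gamma\cap-\Gamma=\{0\}$ to force $W\subseteq R_0$, and compare degree-zero parts. Your write-up is in fact slightly more careful than the paper's at two points (why homogeneity of $w$ in $AR$ descends to $R$, and why $(AR)_0=A$), but these are elaborations of the same argument rather than a different one.
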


\begin{corollary} Let $\Gamma$ be an arbitrary torsion free commutative, cancellative monoid such that $\Gamma \cap -\Gamma = \{0\}$. Let $R$ be a $\Gamma$-graded domain. If $R$ is a graded globally perinormal domain and $R_0$ is perinormal, then $R_0$ is globally perinormal.

\begin{proof} Suppose that $R$ is a graded globally perinormal domain and $R_0$ is perinormal. Then every flat graded overring of $R$ is a localization of $R$ by Proposition~\ref{gradedperiprop6.2}. So by Proposition~\ref{FQR-descent}, every flat overring of $R_0$ is a localization of $R_0$. It follows from~\cite[Proposition 6.2]{EpSh-peri} that $R_0$ is globally perinormal since it is already perinormal by assumption.
\end{proof}
\end{corollary}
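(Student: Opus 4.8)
The plan is to reduce the statement to the non-graded criterion for global perinormality, namely~\cite[Proposition 6.2]{EpSh-peri}, which says that a perinormal domain is globally perinormal precisely when every flat overring of it is a localization. Since $R_0$ is assumed perinormal, it will suffice to show that every flat overring of $R_0$ is a localization of $R_0$. That conclusion is exactly the output of Proposition~\ref{FQR-descent}, whose hypothesis is that every flat graded overring of $R$ is a localization of $R$. Thus the entire problem reduces to verifying that single hypothesis about $R$.

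To verify it, the first step is the (easy) observation that $R$ is itself a graded perinormal domain: any gr-local gGD graded overring of $R$ is in particular a gGD graded overring, hence a localization of $R$ by the assumption that $R$ is graded globally perinormal. Having this lets me invoke Proposition~\ref{gradedperiprop6.2}, which is stated for graded perinormal domains. The forward implication of that biconditional then gives, from the assumption that $R$ is graded globally perinormal, that every graded flat overring of $R$ is a localization of $R$.

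With the hypothesis of Proposition~\ref{FQR-descent} now established, I would apply that proposition directly; here the standing assumptions that $\Gamma$ is torsion free, commutative, cancellative, and satisfies $\Gamma \cap -\Gamma = \{0\}$ are precisely the conditions it requires. This yields that every flat overring of $R_0$ is a localization of $R_0$. Combining this with the hypothesis that $R_0$ is perinormal and feeding both into~\cite[Proposition 6.2]{EpSh-peri} gives that $R_0$ is globally perinormal, completing the argument.

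I do not expect a serious obstacle: the proof is essentially a chain of the results already established above. The only point demanding a moment's care is the small observation in the second step that graded global perinormality implies graded perinormality, since that is what licenses the use of Proposition~\ref{gradedperiprop6.2}. It is also worth flagging that the monoid condition $\Gamma \cap -\Gamma = \{0\}$ is used only inside Proposition~\ref{FQR-descent}, where it forces the multiplicative set realizing $AR$ as a localization to lie in degree zero and hence inside $R_0$; everything else in the chain is insensitive to that hypothesis.
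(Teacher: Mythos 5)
Your proposal is correct and follows essentially the same chain as the paper's own proof: Proposition~\ref{gradedperiprop6.2} to get that every flat graded overring of $R$ is a localization, Proposition~\ref{FQR-descent} to descend to $R_0$, and then \cite[Proposition 6.2]{EpSh-peri} together with the perinormality of $R_0$. Your explicit remark that graded global perinormality implies graded perinormality (needed to legitimately invoke Proposition~\ref{gradedperiprop6.2}) is a small point the paper leaves implicit, and it is handled correctly.
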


It would be of interest to determine whether $R$ being a graded perinormal domain implies that $R_0$ is a perinormal domain. An analogous question was posed in~\cite{EpSh-peri} of whether $A[X]$ being a perinormal domain implies that $A$ is perinormal. This has been considered by Dumitrescu and Rani and also by McCrady and Weston. The following are a couple of the results that have been obtained.

\begin{theorem}[\cite{DuRa-peripoly}, Theorem 1.1] Let $A$ be a Noetherian domain with integral closure $A^{\prime}$. Assume that the conductor $(A : A^\prime)$ has height at least two as an ideal and $A \slash (A: A^\prime)$ is zero-dimensional and not local. If $A[X]$ is perinormal, then $A$ is perinormal.
\end{theorem}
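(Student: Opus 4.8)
The plan is to prove the contrapositive: assuming $A$ is not perinormal, I would exhibit a going-down overring of $A[X]$ that is not flat, which forces $A[X]$ to be non-perinormal by the ungraded form of Proposition~\ref{gradedperiprop2.4}, namely \cite[Proposition 2.4]{EpSh-peri}. That same criterion, applied to $A$, hands me a going-down overring $B$ of $A$ that is not flat over $A$. The candidate overring of $A[X]$ is $B[X] = B \otimes_A A[X]$, which sits inside $\operatorname{Frac}(A)(X)$ and is therefore a genuine overring of $A[X]$. Its non-flatness is automatic: writing $B[X] \cong \bigoplus_{n \geq 0} B$ as $A$-modules shows that $B[X]$ is $A$-flat if and only if $B$ is, and since $A \to A[X]$ is faithfully flat, $B[X]$ is $A[X]$-flat if and only if it is $A$-flat, so $B[X]$ is not flat over $A[X]$. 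Everything therefore reduces to the single point that $A[X] \subseteq B[X]$ satisfies going-down, and since going-down does not lift to polynomial rings over an arbitrary base, this is exactly where the hypotheses on $A$ must enter.

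I would first extract the structural content of the hypotheses. Writing $\mathfrak{c} = (A:A^\prime)$, the assumption $\operatorname{ht}\mathfrak{c} \geq 2$ places the non-normal locus $V(\mathfrak{c})$ in codimension at least two, so $A$ is normal in codimension one and in particular satisfies $(R_1)$; consequently, away from $V(\mathfrak{c})$ the domain $A$ is regular, its localizations at height-one primes are valuation rings whose overrings are localizations, and the failure of flatness of $B$ over $A$ must be supported on $V(\mathfrak{c})$. The hypothesis that $A/\mathfrak{c}$ is zero-dimensional and not local identifies $V(\mathfrak{c})$ with a finite set $\{\mathfrak{m}_1,\dots,\mathfrak{m}_n\}$, $n \geq 2$, of maximal ideals of height $\geq 2$, and splits $A/\mathfrak{c}$ as a product $\prod_{i=1}^{n}(A/\mathfrak{c})_{\mathfrak{m}_i}$ of Artinian local rings. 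The orthogonal idempotents of this product already live in $A^\prime/\mathfrak{c}$, so through the conductor square $A = A^\prime \times_{A^\prime/\mathfrak{c}} (A/\mathfrak{c})$ the ring $A^\prime/\mathfrak{c}$ decomposes compatibly across the $\mathfrak{m}_i$, which is what lets me analyse the extension one maximal ideal at a time.

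The crux, and the step I expect to be hardest, is to verify going-down for $A[X] \subseteq B[X]$, which by the ungraded analog of Observation~\ref{gr-goingdown-surj} amounts to showing that $\operatorname{Spec}(B[X]_{\mathfrak{Q}}) \to \operatorname{Spec}(A[X]_{\mathfrak{Q} \cap A[X]})$ is surjective for every $\mathfrak{Q} \in \operatorname{Spec}(B[X])$. Over any prime of $A[X]$ contracting into $A \setminus \bigcup_i \mathfrak{m}_i$ this is immediate, since there $A$ is normal and $B$ localizes identically to $A$; the genuine difficulty is the fibre over primes $\mathfrak{P} \subset A[X]$ with $\mathfrak{P} \cap A \in V(\mathfrak{c})$, where the variable $X$ can interact with the non-normal directions and create specializations in $\operatorname{Spec}(B[X])$ with no counterpart over $A[X]$. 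Here I would use the product decomposition to reduce to a single local factor $(A/\mathfrak{c})_{\mathfrak{m}_i}$, the height bound $\operatorname{ht}\mathfrak{m}_i \geq 2$ to guarantee enough room below $\mathfrak{P}$ to lift the required chains, and the non-local condition $n \geq 2$ to ensure the construction never degenerates $B[X]$ into a flat extension. Controlling precisely these fibres over $V(\mathfrak{c}[X])$ is the heart of the argument and the place where all three hypotheses are indispensable.
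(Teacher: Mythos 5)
First, note that this statement is quoted from Dumitrescu--Rani (\cite{DuRa-peripoly}, Theorem 1.1) and is not proved in the paper under review, so there is no in-paper proof to compare against; your proposal has to stand on its own.

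Your skeleton --- pass to the contrapositive, use \cite[Proposition 2.4]{EpSh-peri} to get a non-flat going-down overring $B$ of $A$, and try to exhibit $B[X]$ as a non-flat going-down overring of $A[X]$ --- is reasonable, and the non-flatness part ($B[X]$ flat over $A[X]$ iff $B$ flat over $A$) is fine. The genuine gap is exactly where you say the difficulty lies: you never actually prove that $A[X]\subseteq B[X]$ satisfies going-down. The final paragraph replaces the argument with intentions (``reduce to a single local factor,'' ``enough room below $\mathfrak{P}$ to lift the required chains''), and no mechanism is given for producing the lifted chains. Worse, the step is false for an \emph{arbitrary} non-flat going-down overring $B$: the paper's own example with $A=\mathbb{R}+t\mathbb{C}\llbracket t\rrbracket$ and $B=\mathbb{C}\llbracket t\rrbracket$ gives an integral, going-down, non-flat overring for which $A[X]\subseteq B[X]$ fails going-down (via McAdam's results). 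So an argument of this shape cannot take $B$ to be whatever witness Proposition 2.4 hands you; the hypotheses on the conductor must be used to \emph{construct a specific} overring of $A[X]$ (or a specific $B$ with controlled structure relative to $A'$ and the finitely many maximal ideals in $V((A:A'))$), and that construction --- the entire mathematical content of the theorem --- is absent. You also never indicate concretely where the ``not local'' hypothesis ($n\ge 2$) enters; your stated use of it (``to ensure $B[X]$ is not flat'') cannot be right, since non-flatness was already established independently of it.

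A secondary inaccuracy: localizing $A$ at primes not containing the conductor gives $A'$ localized there, hence a \emph{normal} (not necessarily regular) local domain; the claim that $A$ is regular off $V((A:A'))$ is unjustified, though it is not load-bearing for your outline.
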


\begin{proposition}[\cite{McWe-periext}, Proposition 4.4] Let $A$ be an universally catenary domain with $\dim A \leq 2$. If $A[X]$ is perinormal, then $A$ is perinormal.
\end{proposition}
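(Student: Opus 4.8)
The plan is to prove this directly through the flatness characterization of perinormality, pushing the hypothesis up to the polynomial ring and then descending the conclusion. By~\cite[Proposition 2.4]{EpSh-peri} it suffices to show that every going-down overring of $A$ is flat over $A$, so let $B$ be an overring of $A$ for which $A \subseteq B$ is a going-down extension. The first step is to pass to polynomial rings: since
$B[X] \subseteq \textnormal{Frac}(A)[X] \subseteq \textnormal{Frac}(A)(X) = \textnormal{Frac}(A[X])$,
the ring $B[X] = A[X] \otimes_A B$ is an overring of $A[X]$, and the whole strategy is to transfer perinormality information from $A[X]$ back down to $A$ across the faithfully flat map $A \to A[X]$.

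Granting for the moment that $A[X] \subseteq B[X]$ again satisfies going-down, the rest of the argument is formal. Because $A[X]$ is perinormal, $B[X]$ must be flat over $A[X]$ by~\cite[Proposition 2.4]{EpSh-peri}. Since $A[X]$ is free, hence faithfully flat, over $A$ and $B[X] = B \otimes_A A[X]$, faithfully flat descent of flatness along $A \to A[X]$ yields that $B$ is flat over $A$. Thus every going-down overring of $A$ is flat, and a final application of~\cite[Proposition 2.4]{EpSh-peri}, now to $A$ itself, shows that $A$ is perinormal. Note that universal catenarity forces $A$ to be Noetherian, which keeps these descent manipulations clean.

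The main obstacle is exactly the suppressed going-down step for $A[X] \subseteq B[X]$, and this is where ``universally catenary'' and $\dim A \leq 2$ are forced to enter. To check going-down one takes primes $Q_1 \subseteq Q_2$ of $A[X]$ together with $\tilde Q_2$ of $B[X]$ satisfying $\tilde Q_2 \cap A[X] = Q_2$, writes $P_i = Q_i \cap A$ and $\tilde P_2 = \tilde Q_2 \cap B$, and uses going-down of $A \subseteq B$ to obtain $\tilde P_1 \subseteq \tilde P_2$ lying over $P_1$. When $Q_1$ is the extended prime $P_1[X]$ the lift $\tilde Q_1 = \tilde P_1[X]$ works at once, since $\tilde P_1[X] \subseteq \tilde P_2[X] \subseteq \tilde Q_2$ and $\tilde P_1[X] \cap A[X] = P_1[X] = Q_1$. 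The genuinely hard case is when $Q_1$ is a height-one prime in the fiber over $P_1$, corresponding to an irreducible polynomial over the residue field $\kappa(P_1)$: one must then find a prime of $B[X]$ that lies over $\tilde P_1$, is contained in $\tilde Q_2$, and contracts to $Q_1$, and a naive choice of an irreducible factor over $\kappa_B(\tilde P_1)$ need not sit inside $\tilde Q_2$. Here universal catenarity is used to guarantee that $A[X]$ is catenary and that heights of primes of $A[X]$ are governed by their contractions to $A$ through the dimension formula, while $\dim A \leq 2$ (so $\dim A[X] \leq 3$) cuts the admissible height configurations of $Q_1 \subseteq Q_2$ and their contractions down to a handful of cases that can be resolved explicitly. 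I expect essentially all the real work of the proof to live in this case analysis.
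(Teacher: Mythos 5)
There is a genuine gap, and it is exactly at the step you flagged as the ``main obstacle'': the claim that $A\subseteq B$ satisfying going-down forces $A[X]\subseteq B[X]$ to satisfy going-down, with universal catenarity and $\dim A\leq 2$ supplying the missing leverage. This claim is false, and a counterexample appears in this very paper: take $B=\mathbb{C}\llbracket t\rrbracket$ and $A=\mathbb{R}+t\mathbb{C}\llbracket t\rrbracket$. Here $A$ is a one-dimensional complete Noetherian local domain, hence excellent and in particular universally catenary with $\dim A=1\leq 2$; $B$ is an overring of $A$ (indeed a finite integral extension with $\operatorname{Spec}(B)\to\operatorname{Spec}(A)$ an order isomorphism of two-element chains, so $A\subseteq B$ trivially satisfies going-down); yet, by McAdam's example, $A[X]\subseteq B[X]$ is not unibranched and therefore does not satisfy going-down. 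So the auxiliary lemma your argument rests on cannot be proved from the hypotheses you propose to use. The only way to rescue the strategy would be to invoke the perinormality of $A[X]$ itself to rule out such overrings $B$ (in the example above $B$ is a non-flat GD overring, so $A$ is not perinormal and hence neither is $A[X]$), but that is a genuinely different argument from the case analysis on heights you sketch, and you give no indication of how perinormality of $A[X]$ would enter the going-down verification. The surrounding formal steps are fine: the reduction via the characterization ``perinormal $\iff$ every GD overring is flat,'' the identification $B[X]\cong B\otimes_A A[X]$ as an overring of $A[X]$, and faithfully flat descent of flatness along $A\to A[X]$ are all correct. (For what it is worth, the paper does not prove this proposition---it is quoted from McCrady and Weston for context---so there is no in-paper proof to compare against; but the paper's own Example directly refutes your key step.)

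A smaller point: universal catenarity does not ``force'' $A$ to be Noetherian; Noetherianness is built into the standard definition of universally catenary (and is assumed throughout the cited sources), so you should state it as a hypothesis being used rather than a consequence being derived.
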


More results can be found in~\cite{DuRa-peripoly} and~\cite{McWe-periext} all of which include the assumption that $A$ is Noetherian.

Below we consider the graded analog of this question for polynomial rings. It follows directly from~\cite[Theorem 1]{Mca-gdpoly} that the only graded prime ideals of $A[X]$ lying over a prime $P$ of $A$ are either of the form $PA[X]$ or $PA[X] + XA[X]$.

\begin{proposition}\label{gr-peri-poly} Let $A$ be a $G$-graded domain and let $A[X]$ have the natural $G \times \mathbb{Z}_{\geq 0}$ grading. If $A[X]$ is a graded perinormal domain, then $A$ is a graded perinormal domain.

\begin{proof} Let $A$ be a $G$-graded domain and let $A[X]$ have the natural $G \times \mathbb{Z}_{\geq 0}$ grading. Note that it follows from~\cite[Theorem 1]{Mca-gdpoly} that the only graded prime ideals of $A[X]$ lying over a prime $P$ of $A$ are either of the form $PA[X]$ or $PA[X] + XA[X]$. For any $f \in A[X]$, $f$ can be expressed as $$f = f_{(g_{1},z_1)} + \cdots + f_{(g_n,z_n)}$$ where $f_{(g_i,z_i)} \in A_{g_i}X^{z_i}$. If $f \in A$, $f_{(g_i,z_i)} = 0$ for all $f_{(g_i,z_i)}$ where $z_i \neq 0$. It follows that if $Q \in h\textnormal{-Spec}(A[X])$, then $Q \cap A \in h\textnormal{-Spec}(A)$.
\bigskip 

Let $A[X]$ be a graded perinormal domain. Let $B$ be a gr-local gGD overring of $A$. It is clear that $B[X]$ is a graded overring of $A[X]$ with respect to the $G \times\mathbb{Z}_{\geq 0}$-grading.

\bigskip

\noindent\textbf{Claim 1:} $B[X]$ is a gr-local overring of $A[X]$.

\begin{proof} (Of Claim 1). Let $M\in  h\textnormal{-Max}(B[X])$. Then $M \cap B \in h\textnormal{-Spec}(B)$. Suppose that $M \cap B \not\in  h\textnormal{-Max}(B)$. Let $\mathfrak{m} \in  h\textnormal{-Max}(B)$ with $M \cap B \subset \mathfrak{m}$. Then $M^\prime \coloneqq \mathfrak{m}B[X] \supset M$ or $M^\prime \coloneqq \mathfrak{m}B[X] + XB[X] \supset M$ and $M^{\prime}\in h\textnormal{-Spec}(B[X])$ which contradicts the maximality of $M$. So $M \cap B \in  h\textnormal{-Max}(B)$. 

\bigskip
Now suppose $N \neq M \in  h\textnormal{-Max}(B[X])$. The only graded prime ideals of $B[X]$ lying over $M \cap B$ are $(M \cap B)B[X]$ and $(M \cap B)B[X] + XB[X]$. Similarly, the only graded prime ideals of $B[X]$ lying over $N \cap B$ are $(N \cap B)B[X]$ and $(N \cap B)B[X] + XB[X]$. Because $B$ is gr-local, $N \cap B = M \cap B \in  h\textnormal{-Max}(B)$. Because $M, N \in  h\textnormal{-Max}(B[X])$, it follows that $M = (M \cap B)B[X] + XB[X]$ and $N = (N \cap B)B[X] + XB[X]$. Hence $N = M$. So $B[X]$ is a gr-local overring of $A[X]$.
\end{proof}
\bigskip

The following observation with proof is included for the reader's convenience as it will be utilized to prove the claim that follows.

\bigskip
\noindent\textbf{Observation:} Let $A \subseteq B$ be an inclusion of rings and let $J$ be an ideal of $B$. Then 
$$(JB[X] + XB[X]) \cap A[X] = (J \cap A)A[X] + XA[X]$$
and 
$$(JB[X] \cap A[X]) = (J \cap A)A[X].$$

\begin{proof} (Of Observation). Note that $f= a_0 + a_1X + \cdots + a_nX^n \in (JB[X] + XB[X]) \cap A[X]$ if and only if $a_0 \in J$, $a_1, ..., a_n \in B$ and $a_0, ..., a_n \in A$, if and only if $a_0 \in J \cap A$ and $a_1, ..., a_n \in A$ if and only if $f \in (J\cap A)A[X] + X A[X]$. Hence $(JB[X] + XB[X]) \cap A[X] = (J \cap A)A[X] + XA[X]$.

\bigskip
Note that $f= a_0 + a_1X + \cdots + a_nX^n \in JB[X] \cap A[X]$ if and only if $a_0, ..., a_n \in J$ and $a_0,..., a_n \in A$ if and only if $a_0, ..., a_n \in J \cap A$ if and only if $f \in (J \cap A)A[X]$. Thus $JB[X] \cap A[X] =(J \cap A)A[X].$
\end{proof}

\noindent\textbf{Claim 2:} $B[X]$ satisfies gGD over $A[X]$.

\begin{proof} (Of Claim 2). Let $P, Q \in h\textnormal{-Spec}(A[X])$ with $P \subseteq Q$ and $\tilde{Q} \in h\textnormal{-Spec}(B[X])$ such that $\tilde{Q} \cap A[X] = Q$. Then $P \cap A, Q \cap A \in h\textnormal{-Spec}(A)$ with $P \cap A \subseteq Q \cap A$ and $\tilde{Q} \cap B \in h\textnormal{-Spec}(B)$ with $\tilde{Q} \cap A = \tilde{Q} \cap A[X] \cap A = Q \cap A$. Because $B$ satisfies gGD over $A$, there exists a prime ideal $\overline{P} \in h\textnormal{-Spec}(B)$ such that $\overline{P} \cap A = P \cap A$ and $\overline{P} \subseteq \tilde{Q} \cap B$. Because $\tilde{Q}$ is a graded prime ideal of $B[X]$, $\tilde{Q} = (\tilde{Q} \cap B)B[X]$ or $\tilde{Q} = (\tilde{Q} \cap B)B[X] + XB[X]$. 

\bigskip
\noindent\textbf{Case 1:} Suppose that $\tilde{Q} = (\tilde{Q} \cap B)B[X]$. Then $\overline{P}B[X] \subseteq \tilde{Q}$ because $\overline{P} \subseteq \tilde{Q} \cap B$. Note that in this case, 
$$Q = \tilde{Q} \cap A[X] = (\tilde{Q} \cap B)B[X] \cap A[X]= (\tilde{Q} \cap A)A[X] = (Q \cap A)A[X].$$
So $P = (P \cap A)A[X]$ because $P \in h\textnormal{-Spec}(A[X])$. Let $\tilde{P} \coloneqq \overline{P}B[X]$. Then $\tilde{P} \subseteq \tilde{Q}$ because $\overline{P} \subseteq \tilde{Q} \cap B$. Also 
$$\tilde{P} \cap A[X] = \overline{P}B[X] \cap A[X] = (\overline{P} \cap A)A[X] = (P \cap A)A[X] = P$$
since $\overline{P} \cap A = (P \cap A)$. So there exists $\tilde{P} \in h\textnormal{-Spec}(B[X])$ such that $\tilde{P} \subseteq \tilde{Q}$ and $\tilde{P} \cap A[X] = P$.

\bigskip
\noindent\textbf{Case 2:} Suppose that $\tilde{Q} = (\tilde{Q} \cap B)B[X] + XB[X]$. We have that $P = (P \cap A)A[X] + XA[X]$ or $P = (P \cap A)A[X]$ because $P$ is a graded prime ideal of $A[X]$ lying over $P \cap A$. If $P = (P \cap A)A[X] + XA[X]$, it follows that $Q = (Q \cap A)A[X] + XA[X]$. Let 
$$\tilde{P} \coloneqq \overline{P}B[X] + XB[X].$$
Then 
\begin{eqnarray*}
\tilde{P} \cap A[X] & =  &(\overline{P}B[X] + XB[X]) \cap A[X]\\ 
& = &(\overline{P} \cap A)A[X] + XA[X]\\
& = &(P \cap A)A[X] + XA[X]\\
& =& P.
\end{eqnarray*}
So $\tilde{P}  \subseteq \tilde{Q}$ and $\tilde{P} \cap A[X] = P$.

\bigskip
If $P = (P \cap A)A[X]$, let $\tilde{P} = \overline{P}B[X]$. Then clearly $\tilde{P} \subseteq \tilde{Q}$ and $\tilde{P} \cap A[X] = P$ as in Case 1.

\bigskip
It follows that $B[X]$ satisfies gGD over $A[X]$ which finishes the proof of the claim.
\end{proof}

Because $A[X]$ is a graded perinormal domain, it follows that $B[X] = (A[X])_W$ for some homogeneous multiplicative subset of $A[X]$. Because $B$ is an overring of $A$, it follows that $W \subseteq A$ and $B[X] = A_W[X]$. It follows that $B = A_W$ and hence $A$ is perinormal.
\end{proof}
\end{proposition}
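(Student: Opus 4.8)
The plan is to start from an arbitrary gr-local gGD graded overring $B$ of $A$ and \emph{lift} it to the graded overring $B[X]$ of $A[X]$ (with the $G\times\mathbb{Z}_{\geq 0}$ grading), show that $B[X]$ is again a gr-local gGD graded overring, now of $A[X]$, invoke the graded perinormality of $A[X]$ to obtain $B[X]=(A[X])_W$ for some homogeneous multiplicative subset $W$, and finally \emph{descend} this equality to conclude $B=A_W$, a homogeneous localization of $A$. The structural input throughout is the consequence of~\cite{Mca-gdpoly} that a graded prime of $A[X]$ lying over a prime $P$ of $A$ is either $PA[X]$ or $PA[X]+XA[X]$; I would first note that this forces $Q\cap A\in h\textnormal{-Spec}(A)$ for every $Q\in h\textnormal{-Spec}(A[X])$, by writing elements in their $X$-graded pieces.

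First I would check that $B[X]$ is gr-local. If $M\in h\textnormal{-Max}(B[X])$, then $M\cap B\in h\textnormal{-Spec}(B)$; were $M\cap B$ not graded-maximal, I could enlarge it to some $\mathfrak{m}\in h\textnormal{-Max}(B)$ and produce the strictly larger graded prime $\mathfrak{m}B[X]$ or $\mathfrak{m}B[X]+XB[X]$, contradicting maximality of $M$. So every graded maximal ideal of $B[X]$ contracts to the unique $\mathfrak{m}\in h\textnormal{-Max}(B)$, and of the only two graded primes over $\mathfrak{m}$ the maximal one is $\mathfrak{m}B[X]+XB[X]$; hence $M$ is uniquely determined and $B[X]$ is gr-local.

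The hard part will be verifying that $B[X]$ satisfies gGD over $A[X]$. Given $P\subseteq Q$ in $h\textnormal{-Spec}(A[X])$ and $\tilde{Q}\in h\textnormal{-Spec}(B[X])$ with $\tilde{Q}\cap A[X]=Q$, I would contract everything to $A$ and $B$ and use gGD of $B$ over $A$ to get $\overline{P}\in h\textnormal{-Spec}(B)$ with $\overline{P}\subseteq \tilde{Q}\cap B$ and $\overline{P}\cap A=P\cap A$; the task is then to lift $\overline{P}$ to a graded prime $\tilde{P}$ of $B[X]$ below $\tilde{Q}$ contracting to $P$. The essential tool is the pair of contraction identities $(JB[X]+XB[X])\cap A[X]=(J\cap A)A[X]+XA[X]$ and $JB[X]\cap A[X]=(J\cap A)A[X]$, proved by comparing coefficients degree-by-degree in $X$. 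With these in hand the verification splits according to the McAdam form of $\tilde{Q}$, and within the case $\tilde{Q}=(\tilde{Q}\cap B)B[X]+XB[X]$ further according to whether $P$ contains $X$: I would take $\tilde{P}=\overline{P}B[X]$ when $X\notin P$ and $\tilde{P}=\overline{P}B[X]+XB[X]$ when $X\in P$. In every case $\tilde{P}\subseteq \tilde{Q}$ is immediate from $\overline{P}\subseteq \tilde{Q}\cap B$, and the contraction identities give $\tilde{P}\cap A[X]=P$. The delicate point is keeping straight, across the cases, which of the ideals contain $X$, since that is what determines the correct McAdam form of the lift.

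Finally I would descend. Graded perinormality of $A[X]$ yields $B[X]=(A[X])_W$ with $W$ a homogeneous multiplicative subset of $A[X]$. A nonzero homogeneous element of $A[X]$ has the form $aX^n$ with $a$ homogeneous in $A$; if some $w\in W$ had $n\geq 1$, then $w^{-1}\in (A[X])_W=B[X]$ would be a nonzero homogeneous element of negative $X$-degree, which is impossible since every nonzero homogeneous element of the polynomial ring $B[X]$ has $X$-degree $\geq 0$. Hence every element of $W$ has $X$-degree $0$, so $W\subseteq A$ and $(A[X])_W=A_W[X]$. Thus $B[X]=A_W[X]$, which forces $B=A_W$; since $W\subseteq A$ is homogeneous, $B$ is a homogeneous localization of $A$. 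As $B$ was an arbitrary gr-local gGD graded overring, $A$ is a graded perinormal domain.
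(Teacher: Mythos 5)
Your proposal is correct and follows essentially the same route as the paper's proof: lift $B$ to $B[X]$, verify gr-locality and gGD via the McAdam dichotomy $PA[X]$ versus $PA[X]+XA[X]$ together with the two contraction identities, then apply graded perinormality of $A[X]$ and descend from $B[X]=(A[X])_W$ to $B=A_W$. Your justification that $W\subseteq A$ (ruling out positive $X$-degree elements of $W$ because their inverses cannot lie in $B[X]$) is in fact slightly more explicit than the paper's.
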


\begin{corollary}\label{gr-peri-multivariable} Let $A$ be a $G$-graded domain. If $A[X_1, ..., X_n]$ is a graded perinormal domain with the natural $G \times \mathbb{Z}_{\geq 0}^n$-grading, then $A$ is a graded perinormal domain.

\begin{proof} This follows by applying Proposition~\ref{gr-peri-poly} a finite number of times.
\end{proof}
\end{corollary}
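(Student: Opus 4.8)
The plan is to prove the corollary by induction, peeling off one indeterminate at a time via Proposition~\ref{gr-peri-poly}. Concretely, I would show by descending induction on $k$ that for each $0 \leq k \leq n$ the ring $A[X_1, \ldots, X_k]$, equipped with its natural $G \times \mathbb{Z}_{\geq 0}^k$-grading, is a graded perinormal domain: the case $k = n$ is the hypothesis, and the case $k = 0$ is the desired conclusion. For the inductive step I would use the identification $G \times \mathbb{Z}_{\geq 0}^k \cong (G \times \mathbb{Z}_{\geq 0}^{k-1}) \times \mathbb{Z}_{\geq 0}$ and write $A[X_1, \ldots, X_k] = B[X_k]$ with $B \coloneqq A[X_1, \ldots, X_{k-1}]$, so that the natural grading on $A[X_1, \ldots, X_k]$ is exactly the natural grading on the polynomial ring $B[X_k]$ over the graded domain $B$. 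Proposition~\ref{gr-peri-poly} applied to the pair $(B, B[X_k])$ then yields that $B = A[X_1, \ldots, X_{k-1}]$ is graded perinormal, which is precisely the statement at level $k-1$.

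The one point requiring care, and the main obstacle, is that Proposition~\ref{gr-peri-poly} is stated for a base domain graded by a torsion-free abelian \emph{group} $G$, whereas the base ring $B = A[X_1, \ldots, X_{k-1}]$ occurring in the inductive step is graded by the \emph{monoid} $G \times \mathbb{Z}_{\geq 0}^{k-1}$, which is not a group once $k \geq 2$. I would dispose of this exactly as in the proof of the $h$-flatness proposition earlier in the paper: regard $B$ as graded by its group of differences $G \times \mathbb{Z}^{k-1}$, a torsion-free abelian group, by setting $B_\gamma = \{0\}$ for every $\gamma \in (G \times \mathbb{Z}^{k-1}) \setminus (G \times \mathbb{Z}_{\geq 0}^{k-1})$. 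This regrading does not alter the set of homogeneous elements of $B$, nor of $B[X_k]$, and hence leaves $h\textnormal{-Spec}$, $h\textnormal{-Max}$, and the notions of graded overring, gGD overring, gr-local, homogeneous localization, and graded perinormality unchanged. With $B$ viewed this way, $B[X_k]$ carries the $(G \times \mathbb{Z}^{k-1}) \times \mathbb{Z}_{\geq 0}$-grading whose nonzero components all lie in $G \times \mathbb{Z}_{\geq 0}^{k-1} \times \mathbb{Z}_{\geq 0} = G \times \mathbb{Z}_{\geq 0}^k$, so its graded perinormality as a $(G \times \mathbb{Z}^{k-1}) \times \mathbb{Z}_{\geq 0}$-graded ring coincides with its graded perinormality as a $G \times \mathbb{Z}_{\geq 0}^k$-graded ring. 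Thus Proposition~\ref{gr-peri-poly} genuinely applies.

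As a sanity check that no additional hypotheses intrude, I would note that the structural input driving Proposition~\ref{gr-peri-poly} — that the graded primes of the polynomial ring over a prime $P$ of the base are exactly $PB[X_k]$ and $PB[X_k] + X_kB[X_k]$ via~\cite{Mca-gdpoly} — is a statement about the base being a graded domain and is insensitive to the regrading above. Granting this, the induction closes: after $n$ applications one descends from $A[X_1, \ldots, X_n]$ to $A$ and concludes that $A$ is a graded perinormal domain. The substantive content is therefore already carried by Proposition~\ref{gr-peri-poly}; the corollary requires only the organizational induction together with the verification that the group-versus-monoid discrepancy in the grading is harmless.
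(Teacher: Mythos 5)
Your proposal is correct and takes essentially the same approach as the paper, whose entire proof is to iterate Proposition~\ref{gr-peri-poly} once per variable. The extra care you take in regrading the intermediate base rings $A[X_1, \ldots, X_{k-1}]$ by the group $G \times \mathbb{Z}^{k-1}$ (extending by zero components, as in the paper's $h$-flatness proposition) addresses a genuine group-versus-monoid mismatch that the paper leaves implicit, and you resolve it correctly.
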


\begin{corollary}\label{gr-peri-multivariable-peri} Let $A$ be an integral domain. If $A[X_1, ..., X_n]$ is a graded perinormal domain with the natural $\mathbb{Z}_{\geq 0}^n$-grading, then $A$ is a perinormal domain.

\begin{proof} This follows by letting $G$ be the trivial group and using Corollary~\ref{gr-peri-multivariable}.
\end{proof}
\end{corollary}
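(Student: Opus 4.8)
The plan is to reduce to the already-established multivariable graded result, Corollary~\ref{gr-peri-multivariable}, by specializing the grading group $G$ to be trivial. First I would observe that when $G = \{0\}$, the product grading $G \times \mathbb{Z}_{\geq 0}^n$ on $A[X_1, \ldots, X_n]$ collapses to the natural $\mathbb{Z}_{\geq 0}^n$-grading, so the hypothesis that $A[X_1,\ldots,X_n]$ is a graded perinormal domain with the $\mathbb{Z}_{\geq 0}^n$-grading is exactly the hypothesis of Corollary~\ref{gr-peri-multivariable} in this special case. Applying that corollary then yields that $A$, viewed as a $\{0\}$-graded domain, is a graded perinormal domain.

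The remaining, and only genuinely new, point is to check that graded perinormality for the trivial grading group coincides with ordinary perinormality. Here I would verify that when $G = \{0\}$, every ideal of $A$ is homogeneous, so that $h\textnormal{-Spec}(A) = \textnormal{Spec}(A)$ and $h\textnormal{-Max}(A) = \textnormal{Max}(A)$; the graded quotient field $A_H$ is just the ordinary quotient field of $A$; graded overrings are precisely overrings; the gr-local condition is exactly the local condition; and a homogeneous multiplicative set is an arbitrary multiplicative set. Under these identifications, the graded going-down condition reduces verbatim to the usual going-down condition, and ``every gr-local gGD graded overring is a localization of $A$'' becomes ``every local going-down overring is a localization of $A$,'' which is precisely the definition of perinormality.

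I do not expect any serious obstacle: the entire content is the bookkeeping observation that the graded definitions specialize to their classical counterparts when the grading is trivial, after which Corollary~\ref{gr-peri-multivariable} does the work. If one wished to be fully explicit, the single spot deserving a sentence of care is the passage from graded overrings to overrings—one should note that since $A_H$ is the full quotient field, any overring of $A$ is automatically a graded overring for the trivial grading, so the two classes of overrings genuinely coincide rather than one merely containing the other, and likewise that a localization of $A$ by a multiplicative set is the same object whether or not it is regarded as graded.
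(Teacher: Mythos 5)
Your proposal is correct and follows exactly the paper's route: specialize $G$ to the trivial group, invoke Corollary~\ref{gr-peri-multivariable}, and observe that for the trivial grading every element is homogeneous, so $h\textnormal{-Spec} = \textnormal{Spec}$, graded overrings are all overrings, gr-local means local, gGD means going-down, and graded perinormality coincides with perinormality. The paper leaves this last identification implicit, so your explicit bookkeeping is a harmless (and helpful) elaboration of the same argument.
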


Note that the proof of the above result utilizes that if $A \subseteq B$ satisfies going-down, then $A[X] \subseteq B[X]$ satisfies gGD. The example below shows that if $A \subseteq B$ satisfies going-down, $A[X] \subseteq B[X]$ need not satisfy going-down.

\begin{example} Let $B = \mathbb{C}\llbracket t \rrbracket$ and $A = \mathbb{R} + t\mathbb{C}\llbracket t \rrbracket$. So $A$ is the pullback of $B \twoheadrightarrow \mathbb{C} \hookleftarrow \mathbb{R}$. Because $\mathbb{C}$ is integral over $\mathbb{R}$, $B$ is integral over $A$ by~\cite[Theorem 1.1.1(7)]{Prufer}. By the example in~\cite{Mca-gdpoly} directly after~\cite[Theorem 3]{Mca-gdpoly}, $A[X] \subseteq B[X]$ is not unibranched and hence $A[X] \subseteq B[X]$ does not satisfy going-down by~\cite[Theorem 3]{Mca-gd}.
\end{example}

\bigskip
\begin{corollary}\label{gr-globally-peri-multivariable-globally-peri} Let $A$ be an integral domain. If $A[X_1, ..., X_n]$ is a graded globally perinormal domain with the natural $\mathbb{Z}_{\geq 0}^n$-grading, then $A$ is a globally perinormal domain.

\begin{proof} Suppose that $A[X_1, ..., X_n]$ is a graded globally perinormal domain. Then $A[X_1, ..., X_n]$ is a graded perinormal domain, so $A$ is perinormal by Proposition~\ref{gr-peri-multivariable-peri}. By Proposition~\ref{gradedperiprop6.2}, every graded flat overring of $A[X_1, ..., X_n]$ is a localization of $A[X_1, ..., X_n]$. Hence by Proposition~\ref{FQR-descent}, every flat overring of $A$ is a localization of $A$. Thus $A$ is globally perinormal by~\cite[Proposition 6.2]{EpSh-peri}.
\end{proof}
\end{corollary}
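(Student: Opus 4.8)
The plan is to descend graded global perinormality of $A[X_1,\ldots,X_n]$ down to $A$ by combining the already-established descent of graded perinormality (Corollary~\ref{gr-peri-multivariable-peri}) with the flat-overring descent machinery of Proposition~\ref{FQR-descent}, and then invoking the non-graded characterization~\cite[Proposition 6.2]{EpSh-peri}. First I would record that a graded globally perinormal domain is in particular a graded perinormal domain, since every gGD overring being a localization certainly forces every \emph{gr-local} gGD overring to be a localization. Applying Corollary~\ref{gr-peri-multivariable-peri} then yields that $A$ is perinormal. This perinormality of $A$ is the hypothesis needed to feed into~\cite[Proposition 6.2]{EpSh-peri} at the end.

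Next I would extract the flat-overring statement. Since $A[X_1,\ldots,X_n]$ is graded globally perinormal and (by the previous paragraph, lifted one level) graded perinormal, Proposition~\ref{gradedperiprop6.2} gives that every graded flat overring of $A[X_1,\ldots,X_n]$ is a localization of $A[X_1,\ldots,X_n]$. The crucial bridge is then Proposition~\ref{FQR-descent}: with $\Gamma = \mathbb{Z}_{\geq 0}^n$, which is a torsion-free cancellative commutative monoid with trivial unit group $\Gamma \cap -\Gamma = \{0\}$, and with the graded domain taken to be $R = A[X_1,\ldots,X_n]$ whose $0$th component is exactly $A$, the hypothesis ``every flat graded overring of $R$ is a localization of $R$'' is satisfied, so the conclusion ``every flat overring of $R_0 = A$ is a localization of $A$'' follows.

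Finally, having shown that $A$ is perinormal and that every flat overring of $A$ is a localization of $A$, I would close by citing~\cite[Proposition 6.2]{EpSh-peri}, which states precisely that for a perinormal domain these two conditions together are equivalent to global perinormality; hence $A$ is globally perinormal.

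The only subtlety to verify is the compatibility of the gradings in the application of Proposition~\ref{FQR-descent}: that proposition is stated for a $\Gamma$-graded domain with $\Gamma\cap -\Gamma = \{0\}$, and here the relevant grading on $A[X_1,\ldots,X_n]$ is the $\mathbb{Z}_{\geq 0}^n$-grading by the multidegrees in the $X_i$ (with $A$ itself sitting in degree $0$), \emph{not} the finer $G\times\mathbb{Z}_{\geq 0}^n$-grading used in the perinormality descent. Since $G$ is the trivial group in this corollary, the two gradings coincide, so there is no real friction; but this is the one place where one should be careful that $R_0 = A$ and that the monoid has trivial units, which I expect to be the main (minor) obstacle rather than any deep step.
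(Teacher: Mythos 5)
Your proposal is correct and follows essentially the same route as the paper's own proof: pass from graded global perinormality to graded perinormality, descend perinormality via Corollary~\ref{gr-peri-multivariable-peri}, extract the flat-overring statement from Proposition~\ref{gradedperiprop6.2}, descend it through Proposition~\ref{FQR-descent} with $\Gamma = \mathbb{Z}_{\geq 0}^n$, and finish with~\cite[Proposition 6.2]{EpSh-peri}. Your added remark on the compatibility of the $\mathbb{Z}_{\geq 0}^n$-grading (with $G$ trivial so $R_0 = A$) is a sensible check that the paper leaves implicit.
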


For perinormal domains, Epstein notes the following result.

\begin{observation}[Epstein, Personal Communication] If $I \subseteq S = k[X,Y]$ is an ideal and $S\slash I$ is a perinormal domain, then $S \slash I$ is regular and hence normal. In fact, if $I \neq (0)$, $S\slash I$ is a Dedekind domain.
\end{observation}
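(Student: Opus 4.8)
The plan is to read off the structure of $S/I$ from the height of the prime $I$, and to show that the small dimension of quotients of $k[X,Y]$ forces the $(R_1)$ condition supplied by perinormality to upgrade to full regularity. Since $S/I$ is a domain, $I$ is prime, and because $\dim S = 2$ the quotient $S/I$ has Krull dimension $0$, $1$, or $2$. The single external fact I would invoke is that a perinormal domain satisfies $(R_1)$, namely \cite[Proposition~3.2]{EpSh-peri}: for every prime $\mathfrak{p}$ of $S/I$ of height at most $1$, the localization $(S/I)_{\mathfrak{p}}$ is a regular local ring.

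I would then argue by cases on $\dim(S/I)$. If $\dim(S/I)=2$, then $I=(0)$ is the unique height-zero prime, so $S/I=k[X,Y]$ is regular and hence normal; this is exactly the case set aside by the hypothesis $I\neq(0)$. If $\dim(S/I)=0$, then $S/I$ is an Artinian domain, hence a field, which is regular and normal (and a degenerate Dedekind domain). The essential case is $\dim(S/I)=1$: in a one-dimensional domain every nonzero prime has height exactly $1$, so each maximal ideal $\mathfrak{m}$ of $S/I$ has height $1$, whence $(R_1)$ makes $(S/I)_{\mathfrak{m}}$ a discrete valuation ring, while $(S/I)_{(0)}$ is the fraction field. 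Thus $S/I$ is regular at every prime, so it is a regular (and therefore normal) Noetherian domain; and a one-dimensional Noetherian domain all of whose localizations at maximal ideals are discrete valuation rings is a Dedekind domain.

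The point I would stress is that the weaker consequence of perinormality, weak normality (\cite[Corollary~3.4]{EpSh-peri}), does not suffice here, since weakly normal one-dimensional rings may still be singular; what does the work is the $(R_1)$ condition, which for a one-dimensional domain coincides with regularity and hence with normality. I do not anticipate a genuine obstacle beyond invoking perinormal $\Rightarrow (R_1)$ and keeping the height bookkeeping straight; the only point to check is that in the one-dimensional case the maximal ideals really have height $1$, so that $(R_1)$ applies to them, which is automatic because $S/I$ is a one-dimensional domain (equivalently, because $S/I$ is an equidimensional, catenary, finitely generated $k$-algebra). Combining the three cases then yields regularity, hence normality, in every case, and the Dedekind conclusion whenever $I\neq(0)$.
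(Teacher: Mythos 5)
Your proof is correct. Note that the paper gives no proof of this Observation at all---it is credited to Epstein as a personal communication and stated without argument---so there is nothing to compare line by line; but your strategy is precisely the one the paper uses for its graded analog, Proposition~\ref{dim2-peri-then-normal}: split on the dimension of the quotient, dispose of the cases $I=(0)$ (where $S/I=k[X,Y]$ is already regular) and $\dim(S/I)=0$ (a field), and in the one-dimensional case apply perinormal $\Rightarrow (R_1)$ from \cite[Proposition 3.2]{EpSh-peri} to the maximal ideals, all of which have height one since $S/I$ is a one-dimensional domain. The paper's graded version stops at normality (via graded valuation domains and the PVMD property), whereas your argument extracts the full strength of the statement: the localizations at maximal ideals are Noetherian valuation rings, i.e.\ DVRs, so $S/I$ is regular and, when $I\neq(0)$, Dedekind by the standard characterization of Dedekind domains as one-dimensional Noetherian domains whose localizations at maximal ideals are DVRs. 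The one conventional point, which you rightly flag, is that when $I$ is maximal $S/I$ is a field, so the statement implicitly counts fields as (degenerate) Dedekind domains. Your closing remark is also accurate: weak normality would not suffice, since one-dimensional weakly normal domains (e.g.\ the nodal cubic in characteristic zero) can be singular; it is the $(R_1)$ condition that carries the argument.
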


A similar result holds for graded perinormal domains.

\begin{proposition}\label{dim2-peri-then-normal} Let $R$ be a normal domain with $\dim(R) = 2$ and let $\mathfrak{p} \in \textnormal{Spec}(R)$. If $R \slash \mathfrak{p}$ is a graded perinormal domain, then $R \slash \mathfrak{p}$ is a normal domain.

\begin{proof} Let $R$ be a normal domain with $\dim(R) = 2$ and let $\mathfrak{p} \in \textnormal{Spec}(R)$. Suppose that $R \slash \mathfrak{p}$ is a graded perinormal domain. If $\mathfrak{p} = (0)$, then $R \slash \mathfrak{p} \cong R$ is a normal domain. So suppose $\mathfrak{p} \neq (0)$ and hence $\dim(R \slash \mathfrak{p}) \leq 1$. If $\dim(R\slash \mathfrak{p}) = 0$, then $R \slash \mathfrak{p}$ is a field and hence is a normal domain.

\bigskip
Suppose that $\dim(R\slash \mathfrak{p}) = 1$. Because $R \slash \mathfrak{p}$ is a graded perinormal domain, $R \slash \mathfrak{p}$ satisfies gr-$(R_1)$ by Proposition~\ref{gr-Prop3.2-peri}. If $\mathfrak{m} =(0) \in h\textnormal{-Max}(R)$, then every nonzero homogeneous element of $R$ is invertible and $R_{(\mathfrak{m})}$ is a graded valuation domain. So $R_{(\mathfrak{m})}$ is a graded valuation domain for every $\mathfrak{m} \in  h\textnormal{-Max}(R)$. Hence $R$ is a PVMD by~\cite[Theorem 3.1]{ACZ-grpruf} and hence is a normal domain.
\end{proof}
\end{proposition}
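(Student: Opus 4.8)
The plan is to organize the argument around $\dim(R/\mathfrak{p})$, dispose of the degenerate cases directly, and in the one substantive case identify $R/\mathfrak{p}$ as a PVMD, from which normality is automatic.

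First I would record the dimension bound. If $\mathfrak{p} = (0)$ then $R/\mathfrak{p} \cong R$ is normal by hypothesis, so assume $\mathfrak{p} \neq (0)$. Since $R$ is a domain, prepending the chain $(0) \subsetneq \mathfrak{p}$ to any chain of primes in $R/\mathfrak{p}$ shows that $\dim(R/\mathfrak{p}) + 1 \leq \dim R = 2$, so $\dim(R/\mathfrak{p}) \leq 1$. If $\dim(R/\mathfrak{p}) = 0$ then $R/\mathfrak{p}$ is a zero-dimensional domain, hence a field, and fields are normal. This leaves the substantive case $\dim(R/\mathfrak{p}) = 1$.

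In that case I would first apply Proposition~\ref{gr-Prop3.2-peri} to the graded perinormal domain $R/\mathfrak{p}$ to obtain that it is gr-$(R_1)$, i.e. $(R/\mathfrak{p})_{(P)}$ is a gr-valuation domain for every $P \in h\textnormal{-Spec}^1(R/\mathfrak{p})$. The crux is to upgrade this to the statement that $(R/\mathfrak{p})_{(\mathfrak{m})}$ is a gr-valuation domain for \emph{every} $\mathfrak{m} \in h\textnormal{-Max}(R/\mathfrak{p})$. Since graded height is bounded above by height and $\dim(R/\mathfrak{p}) = 1$, every graded maximal ideal has graded height at most $1$. A graded maximal ideal of graded height $0$ must be the unique minimal graded prime $(0)$; in that event $(0)$ is the only graded maximal ideal, so $R/\mathfrak{p}$ is a graded field, every nonzero homogeneous element is invertible, and $R/\mathfrak{p}$ is trivially a gr-valuation domain. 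Any graded maximal ideal of graded height exactly $1$ lies in $h\textnormal{-Spec}^1(R/\mathfrak{p})$, so gr-$(R_1)$ makes its graded localization a gr-valuation domain.

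With $(R/\mathfrak{p})_{(\mathfrak{m})}$ a gr-valuation domain at every $\mathfrak{m} \in h\textnormal{-Max}(R/\mathfrak{p})$, I would invoke~\cite[Theorem 3.1]{ACZ-grpruf} to conclude that $R/\mathfrak{p}$ is a PVMD, hence integrally closed, i.e. normal, completing the argument. I expect the middle step to be the main obstacle: gr-$(R_1)$ controls only the graded height-one primes, so the whole reduction hinges on the dimension hypothesis forcing every graded maximal ideal to be either of graded height one or the degenerate zero ideal, thereby closing the gap between ``gr-valuation at graded height-one primes'' and ``gr-valuation at all graded maximal ideals'' that \cite[Theorem 3.1]{ACZ-grpruf} requires.
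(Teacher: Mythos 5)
Your proposal is correct and follows essentially the same route as the paper: reduce to the case $\dim(R/\mathfrak{p})=1$, use Proposition~\ref{gr-Prop3.2-peri} to get gr-$(R_1)$, observe that every graded maximal ideal is either $(0)$ (making the graded localization trivially a gr-valuation domain) or of graded height one (covered by gr-$(R_1)$), and then apply~\cite[Theorem 3.1]{ACZ-grpruf} to conclude $R/\mathfrak{p}$ is a PVMD and hence normal. Your write-up is in fact a little more explicit than the paper's about why the dimension hypothesis forces each graded maximal ideal into one of those two cases, but the argument is the same.
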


\section*{Acknowledgments}
I am very grateful to Neil Epstein for suggesting questions which led to this work and for helpful advice and input along the way. I also am grateful to Sean Lawton for a question which led to Corollary~\ref{gr-peri-multivariable} and Corollary~\ref{gr-globally-peri-multivariable-globally-peri}.

\bibliographystyle{amsalpha}
\bibliography{references}

\end{document}